\theoremstyle{definition}
\newtheorem{theorem}{Theorem}[section]
\newtheorem{lemma}[theorem]{Lemma}
\newtheorem{proposition}[theorem]{Proposition}
\newtheorem{corollary}[theorem]{Corollary}
\newtheorem{problem}[theorem]{Problem}
\newtheorem{definition}[theorem]{Definition}
\newtheorem{example}[theorem]{Example}
\newtheorem{conjecture}[theorem]{Conjecture}
\newtheorem{remark}[theorem]{Remark}
\newcommand{\C}{\mathbb{C}}
\newcommand{\Hom}{\text{Hom}}
\newcommand{\g}{\mathfrak{g}}
\newcommand{\ag}{\mathfrak{a}}
\date{}
\newcommand{\h}{{\mathfrak{h}}}
\begin{document}

\pagestyle{plain}
\title{Symplectic reflection algebras and affine Lie algebras}
\author{Pavel Etingof}
\address{Department of Mathematics, Massachusetts Institute of Technology,
Cambridge, MA 02139, USA}
\email{etingof@math.mit.edu}
\maketitle

\section{Introduction}

The goal of this paper is to present some results and (more
importantly) state a number of conjectures suggesting 
that the representation theory of symplectic reflection
algebras for wreath products categorifies certain structures
in the representation theory of affine Lie algebras (namely, 
decompositions of the restriction of the basic representation
to finite dimensional and affine subalgebras). 
These conjectures arose from the insight due to R. Bezrukavnikov and
A. Okounkov on the link between quantum connections for Hilbert
schemes of resolutions of Kleinian singularities and
representations of symplectic reflection algebras, and 
took a much more definite shape after my conversations with
I. Losev.  

The paper is based on my talk at the conference ``Double affine
Hecke algebras and algebraic geometry'' (MIT, May 18, 2010). 
I'd like to note that it is not a complete study of the subject, but
rather an attempt to give an outline for further investigation; 
at many places, it has speculative nature, and the picture it 
presents is largely conjectural. 

The plan of the paper is as follows. In Section 2, 
we review the preliminaries on the main objects of study - 
symplectic reflection algebras for wreath product groups 
$\Gamma_n=S_n\ltimes \Gamma^n$, $\Gamma\subset SL_2(\Bbb C)$, 
and affine Lie algebras. In Section 3, we define a filtration 
$F_\bullet$ on the Grothendieck group of
the category of representations of $\Gamma_n$ coming from
symplectic reflection algebras. If $\Gamma$ is cyclic, we define
another filtration $\bold F_\bullet$ on the same group, and 
conjecture that they have the same Poincar\'e polynomials. 
In Section 4, we use Ext groups to define the inner product on the Grothendieck
group of the category of finite dimensional representations of 
the symplectic reflection algebra, as well as its
$q$-deformation, and conjecture that the inner product is
positive definite, and that the q-deformation degenerates
at roots of unity. In Section 5, we present results and
conjectures on singular and aspherical hyperplanes 
for symplectic reflection algebras for wreath products, and show
that the aspherical locus coincides with the locus of infinite
homological dimension for the spherical subalgebra. Finally, in
Section 6 we present the main conjectures, which interpret
the homogeneous components of the associated graded spaces for the above
filtrations in terms of the decomposition of the basic
representation of the affine Lie algebra corresponding to
$\Gamma$ via the McKay corresondence (tensored with one copy of
the Fock space) to a finite dimensional or affine subalgebra. 
In this section, we also explain the connection of our
conjectures with the work of Bezrukavnikov and Okounkov, with the
work of Gordon-Martino and Shan, and give motivation for the
conjectures on the inner products (they follow from the
conjectural interpretation of the Ext inner product as the
Shapovalov form on the basic representation). We also 
present evidence for our conjectures, by 
discussing a number of cases when they can be deduced
from the results available in the literature. 

{\bf Acknowledgements.} I would never have thought of these things 
without the encouragement and the vision of R. Bezrukavnikov and A. Okounkov.  
Also, I am very grateful to R. Bezrukavnikov and I. Losev 
for numerous discussions, without which I would have gotten nowhere. 
This work was supported by the NSF grant DMS-0854764.

\section{Symplectic reflection algebras, Gan-Ginzburg
algebras, and affine Lie algebras} 

\subsection{Symplectic reflection algebras for wreath
products}
Let $\Gamma\subset SL_2(\Bbb C)$ be a finite subgroup. 
For $n\ge 1$, let $\Gamma_n$ be the wreath product $S_n\ltimes
\Gamma^n$. Let $V=\Bbb C^2$ be the tautological representation of
$\Gamma$, and $(,)$ be a symplectic form on
$V$. Let $V_n=V\otimes \Bbb C^n$ the corresponding
representation of $\Gamma_n$. Note that the form $(,)$ gives rise to a
symplectic structure on $V_n$ invariant under $\Gamma_n$. 

Let $k\in \Bbb C$, and 
$c: \Gamma\to \Bbb C$ be a conjugation-invariant function. 
Thus, $c=(c_0,...,c_r)$, where $c_i$ is the value of $c$ on the
$i$-th conjugacy class $C_i$ of $\Gamma$ (where $C_0=\lbrace{1\rbrace}$).

For $v\in V$, let $v_i=(0,...,v,...,0)\in V_n$ (where $v$ stands
in the $i$-th place), and similarly for $\gamma\in \Gamma$ let 
$\gamma_i=(1,...,\gamma,...,1)\in \Gamma_n$. Let $s_{ij}=(ij)\in
S_n$. 

\begin{definition} (\cite{EG})
The symplectic reflection algebra $H_{c,k}(\Gamma_n)$ 
is the quotient of $\Bbb C\Gamma_n\ltimes TV_n$ by the relations 
$$
[u_i,v_j]=k\sum_{\gamma\in \Gamma} (\gamma
u,v)s_{ij}\gamma_i\gamma_j^{-1},\ i\ne j;
$$
$$
[u_i,v_i]=(u,v)(\sum_{\gamma\in \Gamma} c_\gamma\gamma_i-
k\sum_{j\ne i}\sum_{\gamma\in \Gamma}
s_{ij}\gamma_i\gamma_j^{-1}),
$$
for all $u,v\in V$. 
\end{definition}

\begin{remark} 1. $H_{0,0}(\Gamma_n)=\Bbb C\Gamma_n\ltimes SV_n$.

2. If $\Gamma\ne 1$, $H_{c,k}(\Gamma_n)$ is the universal 
filtered deformation of $H_{0,0}(\Gamma_n)$. 

3. For $n=1$, the algebra $H_{c,k}(\Gamma_n)$ is independent of $k$; 
we will denote it by $H_c(\Gamma)$. 

4. One has $H_{c,0}(\Gamma_n)=\Bbb CS_n\ltimes
H_c(\Gamma)^{\otimes n}$. 
\end{remark}

\begin{example}
Let $\Gamma=1$, and $c_0=t$. Then we get the algebra 
$H_{t,k}(S_n)$ which is the quotient of $\Bbb CS_n\ltimes TV_n$
by the relations
$$
[x_i,x_j]=0, [y_i,y_j]=0,
$$
$$
[y_i,x_j]=ks_{ij},\ [y_i,x_i]=t-k\sum_{j\ne i}s_{ij}, 
$$
where $\lbrace x,y\rbrace$ is the standard basis of $V=\Bbb C^2$
(such that $(y,x)=1$). This is the rational Cherednik algebra for
$S_n$ with parameters $t,k$. 
\end{example}

Below we will need the following proposition. 

\begin{proposition}\label{clo} (I. Losev) Let $E$ be a finite dimensional
representation of $\Gamma_n$. Then the set $Z_E$ of values of $c,k$
with $c_0=1$ such that $E$ extends to a representation of 
$H_{c,k}(\Gamma_n)$ is Zariski closed in $\Bbb A^{r+1}$.  
\end{proposition}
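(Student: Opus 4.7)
The plan is to realize $Z_E$ as the image of a closed incidence variety under an affine projection and then establish closedness via the valuative criterion. Let $W:=\Hom_{\Gamma_n}(V_n,\mathrm{End}(E))$, a finite-dimensional $\C$-vector space. Giving an extension of the $\Gamma_n$-module $E$ to an $H_{c,k}(\Gamma_n)$-module is the same as specifying $\rho\in W$ such that the commutation relations of Definition 2.1 hold in $\mathrm{End}(E)$. These relations are polynomial of bidegree $(2,1)$ in $(\rho,(c,k))$ --- quadratic in $\rho$, affine-linear in $(c,k)$, with constant term $(u,v)\cdot\mathrm{id}_E$ coming from $c_0=1$. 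Hence the incidence variety $I:=\{(\rho,c,k)\in W\times\mathbb{A}^{r+1}:\text{relations hold}\}$ is Zariski closed, and $Z_E$ is its image under the second projection $\pi:W\times\mathbb{A}^{r+1}\to\mathbb{A}^{r+1}$. By Chevalley's theorem, $Z_E$ is at least constructible.

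To upgrade constructibility to closedness, I would verify the valuative criterion. Given a discrete valuation ring $S$ with uniformizer $t$ and fraction field $K$, and a morphism $\mathrm{Spec}(S)\to\mathbb{A}^{r+1}$ whose generic fiber lies in $Z_E$, the task is to show the special fiber does too. Lifting, one obtains $\rho\in W\otimes_\C K$ satisfying the relations over $K$. If $\rho$ is $t$-regular, then $\rho|_{t=0}$ gives an extension at the special point, and we are done. Otherwise, expand $\rho=t^{-m}\hat\rho+O(t^{-m+1})$ with $m\geq 1$ and $\hat\rho\in W\setminus\{0\}$; leading-order comparison in the relations --- whose right-hand sides are $t$-regular while the left-hand side has leading order $t^{-2m}$ --- forces $[\hat\rho(u_i),\hat\rho(v_j)]=0$ for all $i,j,u,v$. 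Thus $\hat\rho$ defines a nonzero $\Gamma_n$-equivariant commuting family on $E$, equivalently an $H_{0,0}(\Gamma_n)$-module structure.

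The main obstacle is handling this pole case. The strategy is to invoke the reductive gauge group $G:=\mathrm{GL}(E)^{\Gamma_n}$, which acts on $W$ by conjugation and preserves $I$, in order to modify $\rho$ via a $G(K)$-valued gauge transformation that cancels the pole, iteratively lowering $m$ until $m=0$. The crucial leverage is the hypothesis $c_0=1$: the constant-term contribution $(u,v)\cdot\mathrm{id}_E$ in the right-hand side of $[u_i,v_i]=\ldots$ breaks the natural $\mathbb{G}_m$-scaling symmetry $(\rho,(c,k))\mapsto(\lambda\rho,\lambda^2(c,k))$ that would otherwise admit unbounded rescalings of $\rho$. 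Matching successive orders of $t$ in the relations then yields rigid constraints on the expansion coefficients $\hat\rho,\rho_{-m+1},\ldots$ which, combined with the gauge freedom, ought to permit removal of the pole and yield a regular lift at the special fiber, so that the corresponding $(c_*,k_*)$ lies in $Z_E$. An alternative route, which avoids the pole analysis, would be to identify $I$ (after taking the GIT quotient by $G$) with a Nakajima-style quiver variety for the McKay quiver of $\Gamma$ framed by the multiplicity spaces of $E$, and invoke the closedness results already known in that setting.
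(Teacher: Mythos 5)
Your setup is essentially the same as the paper's: you form the incidence variety $I \subset W\times\mathbb{A}^{r+1}$, observe that $Z_E$ is its image under projection, and correctly identify both the reductive gauge group $G=\mathrm{GL}(E)^{\Gamma_n}$ and the $\mathbb{G}_m$-scaling symmetry $(\rho,c,k)\mapsto(\lambda\rho,\lambda^2 c,\lambda^2 k)$ as the relevant structures. Your leading-order analysis is also right: a pole of order $m\ge 1$ forces $[\hat\rho(u_i),\hat\rho(v_j)]=0$, so $\hat\rho$ gives $E$ an $H_{0,0}(\Gamma_n)$-module structure. But the core of the problem is exactly the step you leave as ``ought to permit removal of the pole,'' and this is a genuine gap: nothing in your argument rules out a nonzero $\hat\rho$ (for instance $V_n$ could act by commuting nilpotents compatibly with $\Gamma_n$, and these are not generically killable by conjugation). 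You never use any special property of the degeneration, and in particular you never use what turns out to be the decisive fact.

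The paper takes a different and more efficient route. Rather than fixing $c_0=1$ and running the valuative criterion directly, it works in $\mathbb{A}^{r+2}$ (treating $c_0$ as a coordinate) so that the $\mathbb{C}^*$-action is available, takes $\widetilde{Z}_E$ to be the \emph{closure} of $\widehat{Z}_E\cap\{c_0\neq 0\}$, passes to the categorical quotient $X=\widetilde{Z}_E/\mathrm{Aut}_{\Gamma_n}(E)$, and proves that the map $\phi\colon X\to\mathbb{A}^{r+2}$ given by the coordinates $c_j,k$ is a \emph{finite} morphism. Since finite morphisms are closed, $\mathrm{Im}\,\phi$ is closed, and $Z_E=\mathrm{Im}\,\phi\cap\{c_0=1\}$ is closed; the valuative criterion is never invoked explicitly. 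Finiteness reduces, via the standard fact about contracting $\mathbb{C}^*$-actions (if homogeneous positive-degree functions cut out the origin set-theoretically, the induced map to affine space is finite), to showing that the zero fiber of $\phi$ is a single point. This is where the real content lies, and it is supplied by a Poisson-geometric argument that your proposal does not have a counterpart to: any $H_{0,0}(\Gamma_n)$-module structure on $E$ arising as a degeneration from the locus $c_0\neq 0$ is a finite-dimensional Poisson module over $(SV_n)^{\Gamma_n}$, hence is supported on zero-dimensional symplectic leaves of $V_n/\Gamma_n$, and the only such leaf is the origin --- so $V_n$ acts by $0$ in the semisimplification. This forces the zero fiber of $\phi$ to be a point. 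Your analysis identifies that $\hat\rho$ is an $H_{0,0}$-structure but stops there; the paper shows that because it is a \emph{limit from $c_0\neq 0$}, it must be supported at $0$, and that this, combined with the $\mathbb{C}^*$-action and GIT, forces the finiteness that gives closedness for free. If you want to salvage the valuative-criterion route, the missing ingredient is precisely this Poisson support argument, not a gauge-theoretic pole cancellation.

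Your closing suggestion to identify $I/\!/G$ with a Nakajima quiver variety and import known closedness results is not an argument: the relevant quiver variety results (projectivity of the Hilbert--Chow type morphism, properness of the moment-map fiber over the parameter space) are proved by essentially the same finiteness-of-the-invariant-ring mechanism as the lemma here, so one would be assuming the conclusion in a different guise.
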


\begin{proof}
Let $Y=\Hom_{\Gamma_n}(V_n,{\rm End}(E))$, and let 
$\widehat{Z}_E\subset Y\times \Bbb A^{r+2}$ 
be the set of maps that give a representation 
of $H_{c,k}(\Gamma_n)$ (without the assumption 
that $c_0=1$). Let $\widetilde{Z}_E$ be the closure of 
the intersection of $\widehat{Z}_E$ with the set $c_0\ne 0$. 
We have an action of the reductive group ${\rm Aut}_{\Gamma_n}(E)$
on $\widetilde{Z}_E$, and a map 
$$
\phi: \widetilde{Z}_E/{\rm Aut}_{\Gamma_n}(E)\to \Bbb A^{r+2}.
$$
from the categorical quotient to the space of parameters. 
The set $Z_E$ is the intersection of ${\rm Im}\phi$ with the
hyperplane $c_0=1$, so it suffices to show that ${\rm Im}\phi$ is
closed. This follows from the following lemma. 

\begin{lemma} $\phi$ is a finite morphism. 
\end{lemma}

\begin{proof}
Let $X$ be an affine variety with a contracting $\Bbb
C^*$-action. It is well known that if 
$f_1,...,f_n$ are homogeneous regular functions 
on $X$ of positive degrees such that the equations
$f_1=0,...,f_n=0$ cut out the point $0\in X$
(set-theoretically), then the map $X\to \Bbb A^n$ induced by
$f_1,...,f_n$ is finite. 

We apply this to $X=\widetilde{Z}_E/{\rm Aut}_{\Gamma_n}(E)$
and $f_i$ being the coordinates $c_j,k$ on $\Bbb A^{r+2}$. Our job is
just to check that the zero set $X_0$ 
of the equations $c_j=0$, $k=0$ 
on $X$ is just a single point, 
i.e. the representation $E$ of $\Gamma_n$ on which $V_n$ acts by zero. 

To see this, note that $X_0$ consists of semisimplifications of 
representations $W$ of $H_{0,0}(\Gamma_n)$ which are 
isomorphic to $E$ as $\Gamma_n$-representations and which 
are obtained as degenerations of representations of 
$H_{c,k}(\Gamma_n)$ with $c_0\ne 0$. 

We claim that such a representation $W$ 
must be supported at $0\in V_n$ as an $SV_n$-module. Indeed, 
let $\psi: H_{c(\hbar),k(\hbar)}(\Gamma_n)\to {\rm
End}(W)[[\hbar]]$ be a formal 1-parameter deformation 
of $W$, with $k(0)=0$, $c(0)=0$, but $c_0(\hbar)\ne 0$. 
Since $W$ lies in the closure of $\widehat{Z}_E\cap
\lbrace{c_0\ne 0}$, such a deformation must exist. 
Let $B={\rm Im}\psi$, and $B_0=B/\hbar B$. 
Since $B$ is a $\Bbb C[[\hbar]]$-free quotient algebra of $H_{c(\hbar),k(\hbar)}(\Gamma_n)$, 
$B_0$ is a finite dimensional Poisson module over the center $(SV_n)^{\Gamma_n}$
of $H_{0,0}(\Gamma_n)$. Hence, $B_0$ is supported at the zero-dimensional symplectic leaves
of $V_n/\Gamma_n$ in the Poisson structure induced by the symplectic structure on $V_n$. 
But the only such symplectic leaf is $0\in V_n/\Gamma_n$. 
Thus, $B_0$ is supported at $0$ as an $SV_n$-module.
Now, the image $B_0'$ of $H_{0,0}(\Gamma_n)$ in ${\rm End}(W)$ 
is a quotient of $B_0$, so it is also supported at $0$. Hence, so is $W$. 
 
This implies that in the semisimplification of $W$, the space $V_n$ must act by zero, which proves
the lemma.    
\end{proof} 

The proposition is proved. 

\end{proof} 

\begin{remark} It is clear that Proposition
\ref{clo} holds, with the same proof, 
for symplectic reflection algebra 
attached to any finite subgroup of $Sp(2n)$. 
\end{remark}

\subsection{The Gan-Ginzburg algebras} 

For any quiver $Q$, Gan and Ginzburg defined algebras 
$A_{n,\lambda,k}(Q)$, parametrized by $n\in \Bbb N$, 
$k\in \Bbb C$, and a complex function $\lambda$ 
on the set $I$ of vertices of $Q$ (\cite{GG}). 
We refer the reader to \cite{GG} for the precise definition;
let us just note that if $n=1$ then this algebra does not depend
on $k$, and is the deformed preprojective algebra 
$\Pi_\lambda(Q)$ defined by Crawley-Boevey and Holland
(\cite{CBH}), and that $A_{n,\lambda,0}(Q)=\Bbb CS_n\ltimes
\Pi_\lambda(Q)^{\otimes n}$. 

It turns out that if $Q=Q_\Gamma$ is the affine quiver of ADE type 
corresponding to $\Gamma$ via McKay's correspondence, then 
the algebra $A_{n,\lambda,k}(Q)$ is Morita equivalent to 
$H_{c,k}(\Gamma_n)$ under a certain correspondence between
$\lambda$ and $c$. Namely, recall that McKay's correspondence 
provides a labeling of the irreducible characters $\chi_i$ of
$\Gamma$ by vertices $i\in I$ of $Q$. 

\begin{theorem}\label{moreq} (\cite{GG}) 
If 
\begin{equation}\label{lc}
\lambda(i)=\frac{1}{|\Gamma|}\sum_{\gamma\in
\Gamma}\chi_i(\gamma)c_\gamma
\end{equation}
then the Gan-Ginzburg 
algebra $A_{n,\lambda,k}(Q)$ is naturally Morita equivalent 
to $H_{c,k}(\Gamma_n)$. 
\end{theorem}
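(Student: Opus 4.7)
The plan is to establish the Morita equivalence by producing a full idempotent $e \in H_{c,k}(\Gamma_n)$ with $e H_{c,k}(\Gamma_n) e \cong A_{n,\lambda,k}(Q)$. A natural choice of $e$ sits inside the subalgebra $\C\Gamma_n \subset H_{c,k}(\Gamma_n)$, whose structure is independent of the deformation parameters $(c,k)$, and is built as an $n$-fold tensor product (over the $n$ copies of $\Gamma$ inside $\Gamma_n$) of primitive central idempotents $e_i \in \C\Gamma$. These $e_i$ are indexed by the irreducible characters $\chi_i$ of $\Gamma$, and hence, via the McKay correspondence, by the vertices $i \in I$ of $Q_\Gamma$; this matches the vertex structure of $A_{n,\lambda,k}(Q)$ to the block decomposition of $\C\Gamma^n$.

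The argument proceeds in two bootstraps. The $n=1$ case is the Crawley-Boevey--Holland theorem: $H_c(\Gamma)$ is Morita equivalent to $\Pi_\lambda(Q_\Gamma)$, and the dictionary \eqref{lc} --- the Fourier transform identifying class functions on $\Gamma$ with functions on $I$ --- emerges from the matching of relations. Extending to general $n$ with $k=0$ is then immediate: by Remark 2.2(4), $H_{c,0}(\Gamma_n) = \C S_n \ltimes H_c(\Gamma)^{\otimes n}$, and the analogous $A_{n,\lambda,0}(Q) = \C S_n \ltimes \Pi_\lambda(Q)^{\otimes n}$ is stated above; since both the $n$-fold tensor product and the $S_n$-crossed product preserve Morita equivalence, the CBH equivalence propagates. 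For $k \neq 0$, one exploits that $e$ is parameter-independent and that both $e H_{c,k}(\Gamma_n) e$ and $A_{n,\lambda,k}(Q)$ are flat $\C[c,k]$-families (by PBW for symplectic reflection algebras and its Gan-Ginzburg counterpart) coinciding at $k=0$; universality of the filtered deformation of $H_{0,0}(\Gamma_n)$ (Remark 2.2(2), for $\Gamma \neq 1$) then forces the identification to persist for all $k$, with $\lambda$ still determined by \eqref{lc}.

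The main obstacle is the explicit matching of relations when $k \neq 0$. Under the corner projection $e(-)e$, the cross-commutator $[u_i,v_j] = k\sum_{\gamma \in \Gamma}(\gamma u,v)\,s_{ij}\gamma_i\gamma_j^{-1}$ from Definition 2.1 must transport to the wreath-type deformation relation of $A_{n,\lambda,k}(Q)$, while the diagonal $[u_i,v_i]$ must produce exactly the vertex scalars $\lambda(i)$ given by \eqref{lc}. The calculation rests on character orthogonality: inserting $e_a \otimes e_b$ around $\gamma_i\gamma_j^{-1}$ and summing over $\gamma$ collapses, via $\sum_\gamma \chi_a(\gamma)\overline{\chi_b(\gamma)} = |\Gamma|\delta_{ab}$, to matrix elements of the McKay tensor product $V \otimes \chi_a$, which is precisely the combinatorial data encoding the arrows of $Q_\Gamma$ and hence the Gan-Ginzburg generators. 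Fullness of $e$ (that is, $H_{c,k}(\Gamma_n) e H_{c,k}(\Gamma_n) = H_{c,k}(\Gamma_n)$) holds at $k=0$ for generic $c$ and propagates to all $k$ by flatness, completing the plan.
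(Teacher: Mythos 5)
The paper does not prove this theorem at all: it is stated with the citation \cite{GG} and taken as known, so there is no in-paper argument to compare against. Evaluated on its own, your proposal has the right general shape --- the equivalence is indeed realized by a corner idempotent $e\in\C\Gamma_n\subset H_{c,k}(\Gamma_n)$ with $eH_{c,k}(\Gamma_n)e\cong A_{n,\lambda,k}(Q)$, and the key computation is a character-orthogonality collapse of the defining relations --- but several of the intermediate steps are wrong as stated. The idempotent cannot be built from primitive \emph{central} idempotents: the block idempotent $\hat e_i=\frac{\chi_i(1)}{|\Gamma|}\sum_\gamma\chi_i(\gamma^{-1})\gamma$ satisfies $\hat e_i\,\C\Gamma\,\hat e_i\cong M_{\chi_i(1)}(\C)$, so the corner never collapses to the commutative vertex algebra $\C^I$ underlying the path algebra. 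One must take $f=\sum_{i\in I}f_i$ with $f_i$ a \emph{primitive} (non-central when $\dim\chi_i>1$) idempotent chosen in each block, so that $f\C\Gamma f\cong\C^I$; the correct $e$ is $f^{\otimes n}$, which is $S_n$-invariant and hence an idempotent of $\C\Gamma_n$.

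The fullness discussion is also misdirected. Since $f$ is full in $\C\Gamma$, $f^{\otimes n}$ is full in $\C\Gamma^{\otimes n}$ and a fortiori in $\C\Gamma_n$, so $H_{c,k}(\Gamma_n)\,e\,H_{c,k}(\Gamma_n)\supseteq\C\Gamma_n\ni 1$ for \emph{every} $(c,k)$; there is nothing to propagate by flatness and no genericity of $c$ is involved. Finally, the bootstrap from $k=0$ to general $k$ via ``universality of the filtered deformation'' is a non sequitur: universality says any filtered deformation of $H_{0,0}(\Gamma_n)$ is some $H_{c',k'}(\Gamma_n)$; it does not identify the corner $eH_{c,k}e$ with the independently defined algebra $A_{n,\lambda,k}(Q)$, nor pin down the dictionary~\eqref{lc}. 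To make such an argument rigorous you would need a parallel universality statement for deformations of $eH_{0,0}(\Gamma_n)e$ and a separate matching of parameters, which is the same amount of work as the direct relation-by-relation computation you defer to your last paragraph. That computation --- conjugating $[u_i,v_j]$ and $[u_i,v_i]$ by $e$, using orthogonality of characters and the McKay decomposition of $V\otimes\chi_i$ --- is the entire content of the theorem, and it works uniformly in $k$; the $k=0$/deformation scaffolding adds complication without saving any work.
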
 

\begin{remark}
1. The inverse transformation to (\ref{lc}) is 
$$
c_\gamma=\sum_{i\in I}\chi_i(\gamma^{-1})\lambda(i).
$$
In particular, $c_0=\sum_i \chi_i(1)\lambda(i)$. 

2. If $\Gamma$ is cyclic, the Morita equivalence of Theorem \ref{moreq} 
is actually an isomorphism. 
\end{remark}

In these notes, we will be interested in representation theory
questions for the algebra $H_{c,k}(\Gamma_n)$. These questions
will always be Morita invariant, so they will be equivalent to the
same questions about the Gan-Ginzburg algebra
$A_{n,\lambda,k}(Q)$; i.e., it does not matter which algebra to
use. 

\subsection{Affine Lie algebras}

It turns out that $\lambda$ is a more convenient
parameter than $c$. Namely, $\lambda$ may be interpreted in terms of 
affine Lie algebras. Before explaining this interpretation, 
let us review the basics on affine Lie algebras (cf. \cite{K}). 

Let $\g$ be the finite dimensional simply laced simple
Lie algebra corresponding to the affine Dynkin diagram $Q_\Gamma$
(agreeing that $\g=0$ for $\Gamma=1$), let $\h$ be its Cartan
subalgebra, let $\widehat{\g}=\g [t,t^{-1}]\oplus \Bbb CK$ 
be the affinization of $\g$, and
let $\widehat{\h}=\h\oplus \Bbb CK$ be the Cartan subalgebra of the affine 
Lie algebra $\widehat{\g}$. Define the extended affine Lie algebra 
$\widetilde{\g}=\widehat{\g}\oplus \Bbb CD$, 
where $[D,b(t)]=tb'(t)$. Its Cartan subalgebra is 
$\widetilde \h=\widehat{\h}\oplus \Bbb CD$.
The space $\widetilde{\h}$ carries a nondegenerate symmetric bilinear form 
obtained by extending the form on $\widehat{\h}$ via $(D, D)=0$, 
$(D, \h)=0, (D, K)=1$. This form defines a form on the dual space $\widetilde{\h}^*$.

Let $\omega_i\in \widetilde{\h}^*$ be the fundamental weights of $\widetilde{\g}$
(so we have $\omega_i(D)=0$), let $\alpha_i\in \widetilde{\h}^*$ be its simple positive roots, and 
let $\delta=\sum \chi_i(1)\alpha_i$ be the basic imaginary root. 
Then $\omega_i$ and $\delta$ form a basis of $\widetilde{\h}^*$. 

Now we can interpret $\lambda$ as a 
weight for $\widetilde{\g}$, i.e. $\lambda\in \widetilde{\h}^*$. 
Namely, 
$$
\lambda=\sum_{i\in I}\lambda(i)\omega_i
$$
(so $\lambda(i)=(\lambda,\alpha_i)$).

We will be interested in the ``quantum'' case $c_0\ne 0$,
i.e. when the center of our algebras is trivial. 
Since the parameters can be simultaneously renormalized, we may
assume that $c_0=1$. In terms of $\lambda$, this condition is
written as $(\lambda,\delta)=1$.

\section{Filtrations on $K_0(H_{c,k}(\Gamma_n))$}

Now fix the parameters $c,k$ and consider the group
$K_0(H_{c,k}(\Gamma_n))$ (formed by finite projective modules
modulo stable equivalence). Since ${\rm gr}(H_{c,k}(\Gamma_n))=
H_{0,0}(\Gamma_n)=\Bbb C\Gamma_n\ltimes SV_n$, by 
a standard theorem in algebraic K-theory we have 
a natural isomorphism 
$$
\psi: K_0({\rm Rep} \Gamma_n)\to K_0(H_{c,k}(\Gamma_n)),
$$
where for a finite group $G$, ${\rm Rep} G$ denotes 
the category of finite dimensional complex
representations of $G$ (this isomorphism sends 
$\tau\in {\rm Rep} \Gamma_n$ to the projective $H_{c,k}(\Gamma_n)$-module 
$\psi(\tau):={\rm Ind}_{\Bbb C\Gamma_n}^{H_{c,k}(\Gamma_n)}\tau$). 
Also, we see that $H_{c,k}(\Gamma_n)$ has finite homological
dimension, which implies that every finitely generated
$H_{c,k}(\Gamma_n)$-module $M$ gives a class $[M]\in  
K_0(H_{c,k}(\Gamma_n))$.

Now we would like to define an increasing filtration 
on $K_0(H_{c,k}(\Gamma_n))$: $F_0\subset F_1\subset...\subset
F_n=K_0$. 

To do so, for any finitely generated
$H_{c,k}(\Gamma_n)$-module $M$ let ${\rm Ann}(M)$ denote 
its annihilator, and consider the corresponding graded ideal 
${\rm gr}({\rm Ann}(M))\subset \Bbb C\Gamma_n\ltimes SV_n$. 
Define the {\it annihilator variety} $AV(M)$ to be the zero set of the intersection 
${\rm gr}({\rm Ann}(M))\cap SV_n$ in $V_n^*$. This is a
$\Gamma_n$-invariant subset, and since 
${\rm gr}({\rm Ann}(M))\cap (SV_n)^{\Gamma_n}$
is a Poisson ideal, it is a union of strata of the (finite) stratification 
of $V_n$ by stabilizers of points in $\Gamma_n$. 
Note that all these strata are symplectic and hence even
dimensional. 

Now for $0\le i\le n$ define $F_iK_0(H_{c,k}(\Gamma_n))$ to be the span of classes
$[M]$ of modules $M$ such that $AV(M)$ has dimension at most
$2i$. By transport of structure using $\psi$, this gives us a
filtration on $K_0({\rm Rep} \Gamma_n)$, which we denote by
$F_\bullet^{c,k}$. 

\begin{example}\label{rk1}
Let $n=1$. Then the only nontrivial piece of the filtation $F_\bullet$ is $F_0$, which is the span of the classes 
of representations with zero dimensional annihilator variety (i.e., finite dimensional representations).
We may assume that $\Gamma\ne 1$, since otherwise there is no finite dimensional 
representations and the filtration $F_\bullet$ is trivial. Let $r\ge 1$ be the number of nontrivial conjugacy classes of $\Gamma$.
Let $\lambda=\omega_0$. In this case, simple finite dimensional modules $L_i$ are the irreducible nontrivial 
$\Gamma$-modules $\chi_i$, $i=1,...,r$, with the zero action of $V$ (see \cite{CBH}). 
Let $e_j$ be primitive idempotents of the representations $\chi_j$, 
${\mathcal P}_j=H_c(\Gamma)e_j$ be the corresponding projective modules.
We have ${\rm gr}{\mathcal P}_j=SV\otimes \chi_j$. Consider the Koszul resolution 
of the $\Bbb C\Gamma\ltimes SV$-module $\chi_j$ with the trivial action of $V$: 
$$
0\to SV\otimes \chi_j\to SV\otimes V\otimes \chi_j\to SV\otimes \chi_j\to \chi_j\to 0.
$$
Since 
by McKay's correspondence 
$V\otimes \chi_j=\oplus_{i-j}\chi_i$ (where $i-j$ means that $i$
is connected to $j$ in the quiver) this resolution can be written as 
$$
0\to SV\otimes \chi_j\to \oplus_{i-j}SV\otimes\chi_i\to SV\otimes \chi_j\to \chi_j\to 0.
$$
Lifting this resolution to the filtered situation, we get the resolution 
$$
0\to {\mathcal P}_j\to \oplus_{j-i}{\mathcal P}_i\to {\mathcal P}_j\to L_j\to 0.  
$$
This means that in the Grothendieck group, we have
\begin{equation}\label{resol}
L_i=\sum a_{ij}{\mathcal P}_j,
\end{equation}
and $A=(a_{ij})$ is the Cartan matrix of the quiver $Q_\Gamma$. 

Thus, we see that $F_0$ is spanned by $\sum_{j\in I}a_{ij}{\mathcal P}_j$ 
for all $i\in I$, $i\ne 0$. This implies that $F_1/F_0=\Bbb Z\oplus C_\Gamma$, 
where $C_\Gamma$ is the center of the corresponding simply connected simple Lie group $G$.
\end{example}

If $\Gamma$ is a cyclic group, then we can define another filtration 
on $K_0({\rm Rep} \Gamma_n)$, which we will denote by ${\bf F}_\bullet$. 
Namely, in this case $H_{c,k}(\Gamma_n)$ is a rational Cherednik algebra, 
and we can define the category ${\mathcal O}_{c,k}(\Gamma_n)$ of 
finitely generated modules over this algebra which are locally nilpotent 
under the action of the polynomial subalgebra $\Bbb C[y_1,...,y_n]\subset H_{c,k}(\Gamma_n)$,
where $x,y$ is a symplectic basis of $V$ which is also an eigenbasis for $\Gamma$.  
We have an isomorphism $\eta: K_0({\rm Rep} \Gamma_n)\to K_0({\mathcal O}_{c,k}(\Gamma_n))$ 
which sends $\tau\in {\rm Rep} \Gamma_n$ to the indecomposable projective object $P_{c,k}(\tau)$ covering the 
simple module $L_{c,k}(\tau)$ with lowest weight $\tau$. Then we can define 
${\bf F}_iK_0({\mathcal O}_{c,k}(\Gamma_n))$ to be the span of the classes of modules whose support 
as $\Bbb C[x_1,...,x_n]$-modules has dimension at most $i$, and define ${\bold F}_i^{c,k}K_0({\rm Rep} \Gamma_n)$ 
via transport of structure by $\eta$. 

\begin{problem} Describe the filtrations $F_\bullet^{c,k}$ and ${\bold F}_\bullet^{c,k}$. In
particular, find their Poincar\'e polynomials as functions of $c,k$. 
\end{problem}

\begin{conjecture}
The filtrations ${\bf F}_\bullet^{c,k}$ and $F_\bullet^{c,k}$ have the same Poincar\'e polynomial. 
\end{conjecture}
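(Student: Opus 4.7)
My strategy is to compare the two filtrations through category $\mathcal{O}$, via the natural forgetful map $K_0(\mathcal{O}_{c,k}(\Gamma_n)) \to K_0(H_{c,k}(\Gamma_n))$ sending $[M] \mapsto [M]$. Both groups have rank $|\mathrm{Irr}(\Gamma_n)|$ (via $\eta$ and $\psi$ respectively), and the classes $\{[L_{c,k}(\tau)]\}_\tau$ form a basis of both sides; on the right this can be verified by specializing $c,k \to 0$ and using that $H_{c,k}(\Gamma_n)$ has finite homological dimension. Under the resulting isomorphism I will show that both filtrations are determined by the \emph{same} level function $\tau \mapsto \dim \mathrm{Supp}(L_{c,k}(\tau))$ on this common basis, which forces equal Poincar\'e polynomials.

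\textbf{Key lemma (dimension-doubling).} The crucial technical input is: \emph{for any $M \in \mathcal{O}_{c,k}(\Gamma_n)$, $\dim AV(M) = 2\,\dim \mathrm{Supp}_{\Bbb C[x_1,\ldots,x_n]}(M).$} The lower bound is Poisson-geometric: $AV(M)$ is a union of symplectic leaves of $V_n^*/\Gamma_n$, and each contributing leaf must meet the isotropic $\{y=0\}$ in at least the corresponding part of $\mathrm{Supp}(M)$; since leaves are even-dimensional and their Lagrangian intersection with $\{y=0\}$ has half the dimension, the bound follows. The upper bound comes from the local finiteness of the $y_i$-action, which forces the classes of the $y_i$ in $\mathrm{gr}\,\mathrm{Ann}(M)$ to vanish transversely to $\mathrm{Supp}(M)$, so the leaves contributing to $AV(M)$ are exactly those whose $\{y=0\}$-intersection lies in $\mathrm{Supp}(M)$. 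For simples $L_{c,k}(\tau)$ this equality is essentially implicit in the work of Bezrukavnikov--Etingof and its extensions by Gordon--Martino, Shan, and Losev.

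\textbf{Rank comparison.} By the key lemma, $[L_{c,k}(\tau)] \in F_i^{c,k}$ whenever $\dim \mathrm{Supp}(L_{c,k}(\tau)) \le i$, giving one inclusion. The level function $\tau \mapsto \dim \mathrm{Supp}(L_{c,k}(\tau))$ coincides with the one governing ${\bf F}_\bullet^{c,k}$ by construction of ${\bf F}$. It then remains to prove the reverse inclusion: $F_i^{c,k} \subseteq \mathrm{span}\{[L_{c,k}(\tau)] : \dim \mathrm{Supp}(L_{c,k}(\tau)) \le i\}$. Combined with the fact that the total ranks match, this would give $\dim F_i^{c,k}/F_{i-1}^{c,k} = \#\{\tau : \dim \mathrm{Supp}(L_{c,k}(\tau)) = i\} = \dim {\bf F}_i^{c,k}/{\bf F}_{i-1}^{c,k}$.

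\textbf{Main obstacle.} The hard part is the reverse inclusion: a d\'evissage showing that every finitely generated $H_{c,k}(\Gamma_n)$-module of annihilator-variety dimension $\le 2i$ can, in $K_0(H_{c,k})$, be expressed in terms of category-$\mathcal{O}$ simples of support dimension $\le i$. A priori such $M$ need not lie in the Serre subcategory generated by $\mathcal{O}$ -- Whittaker modules, modules adapted to other polarizations, and modules with non-highest-weight structure could contribute independent-looking classes of small $AV$. The most promising route to bypass this obstacle, I believe, is via the main conjectures of Section 6, which identify both associated gradeds with the weight-graded pieces of a common subrepresentation of the basic representation of $\widetilde{\g}$; the desired equality of Poincar\'e polynomials would then drop out as a formal corollary of that stronger identification.
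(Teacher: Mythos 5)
This statement is one of the paper's open \emph{conjectures}: the paper does not prove it, but only checks it in special cases ($n=1$; $k=0$; cyclic $\Gamma$ with $n=1$), observes that the two filtrations themselves need not agree (Example with $\Gamma=\Bbb Z_\ell$, $n=1$, where $F_1/F_0$ has torsion while $\mathbf F_1/\mathbf F_0$ does not), and points out in Section 6 that the equality of Poincar\'e polynomials would follow from the stronger Conjectures 6.3/6.8, which are also open. So there is no ``paper's own proof'' to compare against; I can only assess your proposal on its merits.

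Your proposal contains a false claim at its very foundation, and this sinks the strategy independently of the ``main obstacle'' you honestly flag. You assert that the classes $\{[L_{c,k}(\tau)]\}$ form a basis of $K_0(H_{c,k}(\Gamma_n)\text{-}\mathrm{mod})$, i.e.\ that the forgetful map $K_0(\mathcal O)\to K_0(H\text{-}\mathrm{mod})$ is an isomorphism. It is not, already for $n=1$ and $\Gamma\ne 1$. Indeed, the Koszul resolution of a Verma gives $[M(\tau)]=[\mathrm{Ind}\,\tau]-[\mathrm{Ind}(V^-\otimes\tau)]$ in $K_0(H\text{-}\mathrm{mod})$, and the corresponding operator on $K_0(\mathrm{Rep}\,\Gamma)\otimes\Bbb C$ (class functions) is multiplication by $g\mapsto\det(1-g|_{V^-})$, which vanishes at $g=1$. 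Hence the classes of Vermas (equivalently, of simples in $\mathcal O$) span a proper subgroup of $K_0(H\text{-}\mathrm{mod})$ of rank $r$ rather than $r+1$; concretely, for $\Gamma=\Bbb Z_2$, $n=1$ both $[L_1]$ and $[L_2]$ are multiples of $[\mathcal P_1]-[\mathcal P_0]$. Consequently the ``reverse inclusion'' you aim at, $F_i\subseteq\mathrm{span}\{[L(\tau)]:\dim\mathrm{Supp}\,L(\tau)\le i\}$, is already false at $i=n$, since $F_n=K_0(H\text{-}\mathrm{mod})$ is not contained in the image of $K_0(\mathcal O)$. The ``common level function'' argument therefore cannot even get started.

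The dimension-doubling heuristic $\dim AV(M)=2\dim\mathrm{Supp}(M)$ for $M\in\mathcal O$ is a sensible and likely-correct observation (it is consistent with the paper's explicit strata $Y_{p,j}$ versus $X_{p,j}$, and with the general primitive-ideal picture due to Losev), and it is clearly part of why one should expect the conjecture to hold, but it only gives you the ``easy'' inclusion relating $\mathbf F_i$ to $F_i$ through the forgetful map, which, as above, loses rank. Finally, your fallback — deducing the statement from the conjectures of Section 6 — is circular within this paper, since those are precisely the conjectures the author is proposing and not proving. In short: the plan correctly identifies the geometric mechanism one would want (symplectic leaves vs.\ their Lagrangian $\{y=0\}$ slices), but the common-basis step is wrong, the reverse inclusion is false as stated, and the remaining steps are deferred to open conjectures, so this is not a proof.
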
 

\begin{example}
Let $\Gamma=\Bbb Z_\ell$ (the cyclic group of order $\ell>1$), 
and $n=1$. Take $\lambda=\omega_0$ (so $(\lambda,\alpha_i)=0$ for all $i>0$). 
In this case, denote Verma modules in category $\mathcal{O}$ by $M_1,...,M_\ell$ (they are 
labeled by the characters of $\Gamma$). We have inclusions $M_\ell\subset...\subset M_1$, 
and the simple modules are the 1-dimensional modules $L_i=M_i/M_{i+1}$ for $i<\ell$, and 
$L_\ell=M_\ell$. Thus, in the Grothendieck group of category ${\mathcal O}$ we have
$$
M_i=\sum_{j\ge i}L_j.
$$
Hence, by BGG duality for the projective objects we have 
$$
P_k=\sum_{i\le k}M_i.
$$
So $M_k=P_k-P_{k-1}$ for $k>1$, and $M_1=P_1$. Hence for $1<i<\ell$, 
$L_i=2P_i-P_{i-1}-P_{i+1}$, while $L_1=2P_1-P_2$.  
Thus we see that if $\chi_1,\chi_2,...,\chi_\ell$ is the 
standard basis of ${\rm Rep} \Gamma$ (where $\chi_\ell$ is the trivial character), 
then ${\bold F}_0K_0({\rm Rep} \Gamma)$ 
is the subgroup of rank $\ell-1$ spanned by 
$2\chi_i-\chi_{i-1}-\chi_{i+1}$ for 
$i<\ell$ (where $\chi_0$ should be dropped). 
 
On the other hand, as explained in Example \ref{rk1}, we have  
$$
L_i=2{\mathcal P}_i-{\mathcal P}_{i-1}-{\mathcal P}_{i+1},
$$
where the indexing is understood cyclically. 
So $F_0K_0({\rm Rep} \Gamma)$ 
is the subgroup of rank $\ell-1$ spanned by 
$2\chi_i-\chi_{i-1}-\chi_{i+1}$ for 
$i<\ell$ (where $\chi_0$ should be interpreted as $\chi_\ell$). 

We see that while the Poincar\'e polynomials of both filtrations
are the same (and equal $1+(\ell-1)t$), the filtrations are not quite the same 
and even are not isomorphic (although they, of course, become isomorphic after tensoring with $\Bbb Q$). 
Indeed, the quotient group for the filtration $\bold F_\bullet$ is free (i.e., $\Bbb Z$),
while for the filtration $F_\bullet$ it is not free (namely, it is 
$\Bbb Z\oplus \Bbb Z_\ell$).  

In a similar manner one checks the coincidence of the Poincar\'e
polynomials for any value of $\lambda$ (or $c$) and $n=1$;
namely, both polynomials are equal to $\ell+m(t-1)$, where $m$ is the
dimension of the span of the roots $\alpha$ of $\g$ such that $(\lambda,\alpha)$ 
is an integer. This implies that the Poincar\'e polynomials also 
coincide for $H_{c,0}(\Gamma_n)$ for $n>1$.  
\end{example}

\section{The inner product on the Grothendieck group of 
the category of finite dimensional modules} 

\subsection{The inner product $(,)$}
It is known (see e.g. \cite{ES}) that $H_{c,k}(\Gamma_n)$ 
has finitely many irreducible finite dimensional modules. 
So we can define the finitely generated free abelian group $K_0(H_{c,k}(\Gamma_n)-{\rm mod}_f)$ 
(where the subscript $f$ denotes finite dimensional modules). 
We have a natural pairing 
$$
B: K_0(H_{c,k}(\Gamma_n)-{\rm mod})\times K_0(H_{c,k}(\Gamma_n)-{\rm mod}_f)\to \Bbb Z
$$
 given by 
$$
B(M,N)=\dim {\rm R}\Hom(M,N),
$$
where the dimension is taken in the supersense. 
Note that this makes sense since $H_{c,k}$ has finite homological dimension. 
Also, the same formula, $(M,N)=\dim {\rm R}\Hom(M,N)$, 
defines an inner product on $K_0(H_{c,k}(\Gamma_n)-{\rm mod}_f)$. 

\begin{conjecture}\label{pd}
The inner product (,) is symmetric and positive definite 
(in particular, nondegenerate). 
\end{conjecture}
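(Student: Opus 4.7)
The plan is to split Conjecture \ref{pd} into symmetry and positive definiteness, which require different tools: symmetry comes from a Calabi--Yau duality for $H_{c,k}(\Gamma_n)$, whereas positive definiteness appears to need the Lie-theoretic categorification of Section 6.

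For symmetry, the first step is to show that $H_{c,k}(\Gamma_n)$ is twisted Calabi--Yau of dimension $2n=\dim V_n$ with trivial Nakayama automorphism. The bound on global dimension follows by deforming the Koszul resolution of the trivial module over $\C\Gamma_n\ltimes SV_n$, and triviality of the Nakayama automorphism reflects the $\Gamma_n$-invariance of the symplectic form on $V_n$ (the argument available in the rational Cherednik case should transfer to the wreath product case with only bookkeeping changes). This yields a functorial isomorphism
$$
\mathrm{Ext}^i_{H_{c,k}(\Gamma_n)}(M,N)\cong \mathrm{Ext}^{2n-i}_{H_{c,k}(\Gamma_n)}(N,M)^*
$$
for finite dimensional $M,N$; since $2n$ is even, taking alternating dimensions gives $(M,N)=(N,M)$.

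For positive definiteness I would attempt to verify the conjecture in tractable cases and then propagate: (i) the case $n=1$, using the description of simples via \cite{CBH} and Example \ref{rk1}; (ii) the cyclic $\Gamma$, generic $c,k$ case, where one can use the rational Cherednik structure and Koszulity of category $\mathcal O$ to compute the Gram matrix $G_{ij}=\sum_k (-1)^k \dim \mathrm{Ext}^k(L_i,L_j)$ on simples explicitly; (iii) propagation across connected components of the non-aspherical locus (Section 5) via Proposition \ref{clo} together with upper semicontinuity of $\dim \mathrm{Ext}^i$ in families.

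The main obstacle is that positive definiteness is a global statement about $G$ that does not seem to follow from any purely homological argument I know. Calabi--Yau self-duality $\mathrm{Ext}^i(L,L)\cong \mathrm{Ext}^{2n-i}(L,L)^*$ constrains the extremes of the Ext diagram of a simple $L$ but does not pin down the sign of the alternating sum through the middle degree, nor does it give any off-diagonal positivity. The natural mechanism for positivity is the conjectural identification of $(,)$ with the Shapovalov form on a weight space of the basic representation of $\widetilde{\g}$, which is positive definite on an integrable highest-weight module; in that sense Conjecture \ref{pd} really seems to rest on first establishing at least part of the categorification conjectures of Section 6, and my plan above should be viewed as one that succeeds only on the connected components of parameter space where this input is available.
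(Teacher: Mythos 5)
Conjecture \ref{pd} is, as stated, a conjecture: the paper does not prove it in general. What the paper does prove is (a) the $n=1$ case directly from the Koszul-type resolution \eqref{resol}, which shows the Gram matrix is the Cartan matrix of $Q_\Gamma$; (b) the cyclic $\Gamma$ case for \emph{arbitrary} $n,c,k$ (Corollary following Proposition \ref{equa}); and (c) the case $H_{c,0}(\Gamma_n)$, by reduction to $n=1$. It then explains in Subsection \ref{pdmot} that the general case is expected to follow from identifying $(,)$ with the restriction of the Shapovalov form on $\bold V$.

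Your proposal is a genuinely different route, and it misses the paper's key mechanism for the one nontrivial case that is actually proved. For cyclic $\Gamma$, the paper does not appeal to Calabi--Yau duality or to Koszulity of $\mathcal O$ at generic parameters. Instead, it uses Proposition \ref{equa} (that $\mathrm{Ext}$ in category $\mathcal O$ agrees with $\mathrm{Ext}$ over the full algebra, established via completion along $\C[y_1,\dots,y_n]$) together with BGG reciprocity: the Cartan matrix $C=([P_i,L_j])$ of category $\mathcal O$ satisfies $C=N^{T}N$ where $N$ is the decomposition matrix $[M_s:L_i]$. Since $N$ is invertible, $C$ and hence $C^{-1}$ are symmetric and positive definite, and the Gram matrix $((L_i,L_j))$ on the finite-dimensional simples is a principal submatrix of $C^{-1}$. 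This argument yields symmetry and positive definiteness simultaneously, at \emph{all} parameter values (not just generic ones), and is what you should compare against.

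Two concrete concerns with your plan. First, your step (iii), propagation across components of the non-aspherical locus via Proposition \ref{clo} and semicontinuity of $\dim\mathrm{Ext}^i$, does not control the alternating sum: the individual $\dim\mathrm{Ext}^i(L_a,L_b)$ can jump in a family, and the signature of the Gram matrix $G$ is not a semicontinuous quantity, so positive definiteness at one point of a component does not transfer to another. (Indeed, Proposition \ref{clo} is used in the paper only to pass from generic to special points on a singular hyperplane in the context of asphericity, not to transport positivity of a bilinear form.) Second, the Calabi--Yau symmetry argument, while plausible in spirit (Etingof--Ginzburg show $H_{c,k}$ has finite global dimension and the relevant Gorenstein/Cohen--Macaulay properties used in Section 5), would require you to actually verify the twisted CY property with trivial Nakayama automorphism for $H_{c,k}(\Gamma_n)$ — this is not stated or used in the paper, and even if true it gives only symmetry, not positivity. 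You are right, and the paper agrees, that the conceptual source of positivity is the Shapovalov form on $\bold V$; but for the cyclic case one can bypass Section 6 entirely via BGG reciprocity, and that is the proof the paper supplies.
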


The motivation for Conjecture \ref{pd} is explained in Subsection \ref{pdmot}. 

Conjecture \ref{pd} would imply 

\begin{conjecture}\label{pd1} 
The natural map 
$$
\zeta: K_0(H_{c,k}(\Gamma_n)-{\rm mod}_f)\to F_0K_0(H_{c,k}(\Gamma_n)-{\rm mod})
$$ 
is injective 
(hence an isomorphism). 
\end{conjecture}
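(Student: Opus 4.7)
The plan is to deduce Conjecture \ref{pd1} directly from Conjecture \ref{pd} by exploiting the compatibility between the inner product $(,)$ and the pairing $B$. First I would check that $\zeta$ is well-defined and lands in $F_0K_0(H_{c,k}(\Gamma_n))$: any short exact sequence of finite dimensional modules is in particular a short exact sequence of finitely generated modules, and any finite dimensional $M$ has $AV(M)=\{0\}$. Moreover, $\zeta$ surjects onto $F_0$: if $M$ is finitely generated with $\dim AV(M)\le 0$, then because $AV(M)\subset V_n^*$ is conical it must equal $\{0\}$, whence the associated graded ${\rm gr}(M)$ with respect to any good filtration is a finitely generated $SV_n$-module supported at $0$, hence finite dimensional, and so $M$ itself is finite dimensional.

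The key observation is the tautological compatibility
\[
(\alpha,\beta)=B(\zeta(\alpha),\beta)
\]
for all $\alpha,\beta\in K_0(H_{c,k}(\Gamma_n)-{\rm mod}_f)$, which holds because both sides compute $\dim {\rm R}\Hom$ in the supersense of the same pair of finite dimensional modules.

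Injectivity of $\zeta$ then follows in one step. Suppose $\zeta(\alpha)=0$. Then $(\alpha,\beta)=B(\zeta(\alpha),\beta)=0$ for every $\beta\in K_0(H_{c,k}(\Gamma_n)-{\rm mod}_f)$, and Conjecture \ref{pd} asserts that $(,)$ is positive definite, in particular nondegenerate, forcing $\alpha=0$. Combined with the surjectivity established above, $\zeta$ is thus an isomorphism onto $F_0K_0$. Concretely, in terms of the basis of simple finite dimensional modules $L_i$, this amounts to invertibility of the Gram matrix $((L_i,L_j))_{i,j}$, which is a standard consequence of positive definiteness.

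The deduction itself is elementary, and the main obstacle, as well as the entire substantive content of the implication, lies in Conjecture \ref{pd} itself. Establishing positive definiteness of $(,)$ will presumably require the conjectural identification of the Ext inner product with the restriction of the Shapovalov form on the basic representation of the affine Lie algebra indicated in the introduction; once that identification is in hand, positive definiteness follows from the integrability (hence unitarity) of the basic representation.
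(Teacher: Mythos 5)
Your proposal is correct and takes essentially the same approach as the paper, which deduces Conjecture~\ref{pd1} from Conjecture~\ref{pd} in one line via the compatibility $B(\zeta(M),N)=(M,N)$ and nondegeneracy of $(,)$. You have additionally spelled out the surjectivity of $\zeta$ onto $F_0$ (the ``hence an isomorphism'' in the statement), which the paper leaves implicit; that argument is also sound.
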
 

Indeed, $B(\zeta(M),N)=(M,N)$. 

\begin{example}
Let $n=1$. In this case, from (\ref{resol}) 
we see that $(L_i,L_j)=a_{ij}$, and Conjecture \ref{pd}
follows. 

In a similar way, using the results of \cite{CBH}, one can show that Conjecture \ref{pd} holds for 
any $\lambda$ and $n=1$. This implies that it holds for $H_{c,0}(\Gamma_n)$ for $n>1$. 
\end{example}
 
\begin{proposition}\label{equa}\footnote{The author is grateful
to I. Losev, who caught an error in the original proof of this proposition.}
Let $\Gamma$ be a cyclic group. 
Then for any $H_{c,k}(\Gamma_n)$-modules 
$M,N$ from category ${\mathcal O}$, one has 
$$
{\rm Ext}^i_{\mathcal O}(M,N)\cong {\rm Ext}^i(M,N), 
$$
where the subscript ${\mathcal O}$ means that Ext 
is taken in category ${\mathcal O}$.  
\end{proposition}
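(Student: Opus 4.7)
The plan is to reduce the Ext comparison to a homological computation over the subalgebra $B = \mathbb{C}[y_1,\ldots,y_n] \rtimes \Gamma_n$ via the PBW adjunction, after first verifying that $\mathcal{O}$ is well-behaved as an abelian subcategory of $H_{c,k}(\Gamma_n)$-mod.

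The first step I would carry out is to show that $\mathcal{O}$ is closed under extensions and subquotients inside $H_{c,k}(\Gamma_n)$-mod. Closure under subquotients follows from Noetherianity of $H_{c,k}(\Gamma_n)$ together with the fact that local $y_i$-nilpotence and finite generation descend to subobjects and quotients. For extension closure, given $0 \to N \to E \to M \to 0$ with $M, N \in \mathcal{O}$: $E$ is finitely generated (by lifts of generators of $M$ together with generators of $N$), and for local $y$-nilpotence, any $e \in E$ has image $m \in M$ annihilated by some finite-codimension ideal $I \subset \mathbb{C}[y_1,\ldots,y_n]$, so $I \cdot e \subset N$; since $I$ is finitely generated and $N$ is locally $y$-nilpotent, a further finite-codimension ideal $J$ annihilates the finitely many generators of $I \cdot e$, so $JI$ annihilates $e$. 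This immediately handles $i = 0, 1$: $\mathrm{Hom}_{\mathcal{O}} = \mathrm{Hom}$ tautologically, and $\mathrm{Ext}^1$ on both sides is computed by equivalence classes of short exact sequences, which automatically lie in $\mathcal{O}$.

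For $i \geq 2$, I would exploit the triangular decomposition of $H_{c,k}(\Gamma_n)$ (PBW): $H$ is free as a right $B$-module, so the functor $\mathrm{Ind}_B^H = H \otimes_B (-)$ is exact and the standard adjunction gives
\[
\mathrm{Ext}^i(H \otimes_B \sigma, N) \cong \mathrm{Ext}^i_B(\sigma, N|_B)
\]
for any $B$-module $\sigma$. Applying this to a Verma module $\Delta(\sigma) = H \otimes_B \sigma$ (with $\sigma$ a $\Gamma_n$-representation and $y_i$ acting by zero), the right-hand side can be computed explicitly via the Koszul resolution of $\sigma$ over $B$. A parallel computation in $\mathcal{O}$ should produce $\mathrm{Ext}^i_{\mathcal{O}}(\Delta(\sigma), N) \cong \mathrm{Ext}^i_B(\sigma, N|_B)$, at which point the identification extends from Vermas to projectives in $\mathcal{O}$ (which are $\Delta$-filtered) via long exact sequences and the five-lemma, and then to arbitrary $M \in \mathcal{O}$ via an $\mathcal{O}$-projective resolution.

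The main obstacle is the parallel computation in $\mathcal{O}$: the naive lift of the Koszul resolution over $B$ via $\mathrm{Ind}_B^H$ produces modules of the form $H \otimes_{\mathbb{C}\Gamma_n} W$ on which the $y_i$ act nontrivially and which therefore fail to lie in $\mathcal{O}$. To complete the argument, one must either construct a modified resolution of $\Delta(\sigma)$ inside $\mathcal{O}$ whose $\mathrm{Hom}$-complex matches the Koszul one, or invoke a spectral sequence (e.g.\ a Grothendieck spectral sequence associated to the pair of functors $\mathrm{Hom}_{\mathcal{O}}(-,N)$ and the inclusion $\mathcal{O} \hookrightarrow H\text{-mod}$) relating the two computations. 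This matching of two Ext computations on the same module via essentially different resolutions is the delicate point that presumably accounts for the original error flagged in the footnote.
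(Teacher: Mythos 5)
The proposal takes a genuinely different route from the paper and, as you candidly note, it is incomplete at the decisive point. Your observations for $i=0,1$ via closure of $\mathcal{O}$ under extensions and subquotients are fine, and the Vermas $\to$ $\Delta$-filtered projectives $\to$ general $M$ bootstrap would work \emph{if} the base case were established; but it is not. The Koszul resolution of $\Delta(\sigma)=H\otimes_B\sigma$ has terms $H\otimes_{\mathbb{C}\Gamma_n}(\wedge^j\mathbb{C}^n\otimes\sigma)$ on which the $y_i$ act freely, so they are projective over $H$ but lie nowhere near $\mathcal{O}$; this complex computes the global $\mathrm{Ext}$ and by itself says nothing about $\mathrm{Ext}_{\mathcal{O}}$. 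Neither of your two proposed escapes is carried out, and the Grothendieck spectral sequence associated to $\mathrm{Hom}_{\mathcal{O}}(-,N)$ and the exact inclusion $\mathcal{O}\hookrightarrow H\text{-mod}$ is not a composite of derived functors, so it does not produce the comparison.

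The paper closes this gap by completion. Let $\mathfrak{m}\subset\mathbb{C}[y_1,\ldots,y_n]$ be the augmentation ideal and $\widehat{H}=\varprojlim\, H/H\mathfrak{m}^r$. Take \emph{any} free resolution $F_\bullet\to M$ over $H$ (no reduction to Vermas at the top level) and set $\widehat{F_i}=\widehat{H}\otimes_H F_i$. Since $N$ is locally $\mathfrak{m}$-nilpotent, one has $\mathrm{Hom}_H(F_i,N)=\mathrm{Hom}_{\widehat{H}}(\widehat{F_i},N)$, so the Hom complex is unchanged by completion. The substance is Lemma \ref{reso}: $\mathrm{Tor}_i^H(\widehat{H},M)=0$ for $i>0$ and $M\in\mathcal{O}$, so $\widehat{F_\bullet}\to M$ remains exact. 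This is proved first for Vermas (the Koszul complex over $\mathbb{C}[[y_1,\ldots,y_n]]$ is exact), then for all of $\mathcal{O}$ by d\'evissage on length. Each $\widehat{F_i}$ is a pro-object of $\mathcal{O}$, being the inverse limit of $F_i/F_i\mathfrak{m}^r\in\mathcal{O}$, so $\widehat{F_\bullet}$ is precisely the ``modified resolution inside $\mathcal{O}$'' that your plan calls for but does not construct; comparing it with an $\mathcal{O}$-projective resolution (which exists by \cite{GGOR}) finishes the proof. In short, your diagnosis of where the difficulty lies is accurate, but the key device --- $\mathfrak{m}$-adic completion together with the Tor-vanishing lemma --- is missing from the proposal.
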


\begin{proof}
Fix a resolution of $M$ by finitely generated free 
modules over $H:=H_{c,k}(\Gamma_n)$:
$$
...\to F_1\to F_0\to M\to 0.
$$
Then ${\rm Ext}^\bullet(M,N)$ is the cohomology of the 
corresponding complex 
\begin{equation}\label{comp}
\Hom_H(F_0,N)\to \Hom_H(F_1,N)\to...
\end{equation}
Let $\widehat{H}$ be the completion of $H$ 
near $0$ as a right $\Bbb C[y_1,...,y_n]$-module:
$$
\widehat{H}:=H\hat\otimes_{\Bbb C[y_1,...,y_n]}\Bbb C[[y_1,...,y_n]]:=\underleftarrow{\lim}H/H{\mathfrak{m}}^r,
$$ 
where ${\mathfrak{m}}$ is the augmentation ideal in $\Bbb C[y_1,...,y_n]$
Then the $H$-action on $M$ and $N$ extends to
a continuous action of the completed algebra 
$\widehat{H}$. Set $\widehat{F_i}=\widehat{H}\otimes_H F_i$.    
Then complex (\ref{comp}) coincides with the complex
\begin{equation}\label{comp1}
\Hom_{\widehat{H}}(\widehat{F_0},N)\to \Hom_{\widehat{H}}(\widehat{F_1},N)\to...,
\end{equation}

\begin{lemma}\label{reso}
The sequence 
$$
...\to\widehat{F_1}\to \widehat{F_0}\to M\to 0
$$
is exact, i.e., it is a resolution of $M$ as an $\widehat{H}$-module.
\end{lemma}

\begin{proof}
{\it Step 1.} {\it One has ${\widehat
H}\otimes_HM=M$.} Indeed, we have natural maps 
$M\to {\widehat H}\otimes_HM\to M$ (the second map 
is the action of $\widehat{H}$ on $M$), the composition is the
identity, and the first map is surjective\footnote{Indeed, 
if $\sum_{i\ge 0}a_i$, $a_i\in H$, is a convergent series 
in $\widehat{H}$ then for any $v\in M$, there is $i_0$ 
such that for $i\ge i_0$ one has $a_iv=0$, so 
$(\sum_{i\ge 0}a_i)\otimes v=\sum_{i=0}^{i_0}a_i\otimes v$, which 
is the image of $\sum_{i=0}^{i_0}a_iv\in M$.}. 

Thus, it suffices to show that 
${\rm Tor}_i^H(\widehat{H},M)=0$ for $i>0$. 

{\it Step 2.}  
{\it One has ${\rm Tor}_i^H(\widehat{H},M)=0$ for $i>0$
if $M=M(\tau)$ is a Verma module.} 

Indeed, 
in this case we can take $F_\bullet$ to be the Koszul resolution:
$F_j=H\otimes \wedge^j(y_1,...,y_n)\otimes \tau$, with the differential 
being the Koszul differential written in terms of right multiplication by $y_i$. Then $\widehat{F}_j
=\widehat{H}\otimes \wedge^j(y_1,...,y_n)\otimes \tau$ with differential defined in the same way, 
so the exactness of $\widehat{F}_\bullet$ follows from the following claim.  

{\bf Claim.} Let $E$ be any vector space, and let $C_\bullet=E[[y_1,...,y_n]]\otimes \wedge^\bullet(y_1,...,y_n)$, equipped with the 
Koszul differential. Then $C_\bullet$ is exact in all nonzero degrees, and its zeroth homology is $E$. 

{\it Step 3.} {\it One has ${\rm Tor}_i^H(\widehat{H},M)=0$ for $i>0$, for any $M\in {\mathcal O}$.} 

This is shown by induction in $i$. For brevity, write $T_i(M)$ for 
${\rm Tor}_i^H(\widehat{H},M)$. For each particular $i$, by the long exact sequence of homology, 
it suffices to prove that $T_i(M)=0$ for irreducible $M$ (as any module in ${\mathcal O}$ has finite length). 

The base of induction ($i=1$) also follows from the long exact sequence of homology. Indeed, 
let $M=L(\tau)$ be irreducible, and $0\to K\to M(\tau)\to M\to 0$ be a short exact sequence. By Steps 1 and 2, 
a portion of the corresponding long exact sequence looks like: 
$$
0=T_1(M(\tau))\to T_1(M)\to K\to M(\tau)\to M\to 0,
$$
and the map $K\to M(\tau)$ is the same as in the short exact sequence, so $T_1(M)=0$.

It remains to fulfill the induction step. Assume that the statement is known for $i=m$ and let us prove it for $i=m+1$. 
Again assuming $M$ is irreducible and considering the above short exact sequence, we get from the corresponding long exact sequence
(using Step 2 and the induction assumption): 
$$
0=T_{m+1}(M(\tau))\to T_{m+1}(M)\to T_m(K)=0,
$$
so $T_{m+1}(M)=0$, as desired. 

The lemma is proved.

\end{proof}

By Lemma \ref{reso}, 
the sequence 
$$
...\to\widehat{F_1}\to \widehat{F_0}\to M\to 0
$$
is a resolution of $M$ in the pro-completion of 
category ${\mathcal O}$. Since category ${\mathcal O}$ has enough projectives (\cite{GGOR}), 
this resolution is quasiisomorphic to a resolution inside category ${\mathcal O}$. 
Thus, complex (\ref{comp1}) computes ${\rm Ext}_{\mathcal O}^\bullet(M,N)$, as desired. 
\end{proof}

\begin{corollary}
 Conjecture \ref{pd} (and hence \ref{pd1}) holds
for the case of cyclic $\Gamma$ (and arbitrary $n,c,k$). 
\end{corollary}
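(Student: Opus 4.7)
The plan is to combine Proposition \ref{equa} with the highest-weight structure of the category $\OO_{c,k}(\Gamma_n)$ established in \cite{GGOR}. First, every finite dimensional $H_{c,k}(\Gamma_n)$-module automatically belongs to $\OO_{c,k}(\Gamma_n)$: on any such module the Euler element of $H_{c,k}(\Gamma_n)$ has finitely many eigenvalues, forcing the lowering operators $y_i$ to act nilpotently. Hence Proposition \ref{equa} identifies the inner product $(M,N)$ with the Euler form $\sum_i(-1)^i\dim{\rm Ext}^i_\OO(M,N)$ on $K_0(\OO_{c,k}(\Gamma_n))$, restricted to the sublattice spanned by classes of finite dimensional simples.

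Next I use the highest-weight machinery to compute this form globally on $K_0(\OO)$. Let $\Delta(\tau)=M_{c,k}(\tau)$ and $\nabla(\tau)=\Delta(\tau)^\vee$ denote the standard and costandard objects of $\OO$, $d_{\tau\sigma}=[\Delta(\tau):L(\sigma)]$ the decomposition matrix, and $e=d^{-1}$ (which exists, $d$ being unitriangular in the appropriate highest weight ordering). The defining highest-weight property gives ${\rm Ext}^i_\OO(\Delta(\tau),\nabla(\sigma))=\delta_{i,0}\delta_{\tau\sigma}\C$, so $\{[\Delta(\tau)]\}$ and $\{[\nabla(\sigma)]\}$ are dual bases of $K_0(\OO)$ under the Euler form. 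Since BGG duality preserves composition factors one has $[\nabla(\sigma)]=[\Delta(\sigma)]$; expanding $[L(\tau)]=\sum_\sigma e_{\tau\sigma}[\Delta(\sigma)]$ and $[L(\rho)]=\sum_\mu e_{\rho\mu}[\nabla(\mu)]$ then yields
\[
([L(\tau)],[L(\rho)])=(ee^T)_{\tau\rho},
\]
which is manifestly symmetric and, being the Gram matrix of the invertible real matrix $e$, positive definite.

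The inner product of Conjecture \ref{pd} is the restriction of this positive definite form to the sublattice of $K_0(\OO)$ generated by the classes of finite dimensional simples, and positive definiteness passes to any sublattice. This establishes Conjecture \ref{pd}, and Conjecture \ref{pd1} follows as noted after its statement. The main obstacle is conceptual rather than computational: the proof rests entirely on Proposition \ref{equa} and on the highest weight structure and BGG duality of $\OO_{c,k}(\Gamma_n)$ supplied by \cite{GGOR}; once these are in hand the argument reduces to the standard fact that in a highest weight category the Gram matrix of the Euler form on simples is $ee^T$ with $e$ the inverse of the decomposition matrix, so no new computation of Ext groups is required.
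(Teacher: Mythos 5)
Your proof is correct and follows essentially the same route as the paper's: both rely on Proposition \ref{equa} to transfer the Ext computation into category $\OO$, and both then identify the Gram matrix on simples as the inverse Cartan matrix, factored as $ee^T$ with $e$ the inverse decomposition matrix (the paper writes this as $C=N^TN$ via BGG reciprocity, you via $\Delta$/$\nabla$ duality in the highest-weight category, which is the same fact in different clothing), and conclude by passing to the principal submatrix indexed by finite dimensional simples.
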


\begin{proof}
Let $L_i$, 
$i=1,...,p$, be the irreducible modules in ${\mathcal O}$.
Let $P_i$ be the projective covers of $L_i$, and $M_i$ the standard modules.  
Then by the BGG reciprocity
$$
[P_i,L_j]=\sum_s [P_i:M_s][M_s:L_j]=\sum_s [M_s:L_i][M_s:L_j].
$$
So if $N$ is the matrix of multiplicities $[M_s:L_i]$ then we see 
that the Cartan matrix $C=([P_i,L_j])$ is given by the formula $C=N^TN$. 
Thus $C^{-1}$ is symmetric and positive definite. On the other hand, 
using Proposition \ref{equa}, it is clear that $(L_i,L_j)=(C^{-1})_{ij}$ if $L_i,L_j$ are finite dimensional.
So the Gram matrix of the form $(,)$ in the basis $L_i$ is a principal submatrix of the 
matrix $C^{-1}$. This implies Conjecture \ref{pd}. 
\end{proof}

\subsection{The q-deformed inner product}
One can also consider the q-deformation of the inner product $(,)$: 
$$
(M,N)_q=\sum (-q)^j\dim {\rm Ext}^j(M,N). 
$$

\begin{conjecture}\label{rootun}
If $q$ is not a root of unity, then the form $(,)_q$ is nondegenerate. 
\end{conjecture}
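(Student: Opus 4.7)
The plan is to analyze the Gram matrix $G_q$ of the form $(,)_q$ in the basis of irreducible finite-dimensional modules $L_1,\dots,L_p$, and to show that $\det G_q\in\ZZ[q]$ has all of its zeros at roots of unity.

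A few observations are purely formal. Since $H_{c,k}(\Gamma_n)$ has finite global dimension, each entry $G_q(i,j)=(L_i,L_j)_q$ is a polynomial in $q$, and $G_0$ equals the identity because $\Hom(L_i,L_j)=\CC\,\delta_{ij}$ for the simples. Hence $\det G_q$ is a polynomial with constant term $1$; in particular it is not identically zero, so $(,)_q$ is already nondegenerate for all but finitely many $q$. The real content of the conjecture is the stronger claim that those exceptional values are roots of unity.

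For this I would first treat the cyclic case, where Proposition~\ref{equa} lets one compute ${\rm Ext}$ inside category ${\mathcal O}_{c,k}(\Gamma_n)$. The rational Cherednik algebra carries a natural internal $\ZZ$-grading by the Euler element, so ${\mathcal O}$ admits a graded lift. Form the graded decomposition matrix $N_q(s,i)=\sum_k[M_s:L_i\langle k\rangle]q^k$ and the graded Cartan matrix $C_q(i,j)=\sum_k[P_i:L_j\langle k\rangle]q^k$. Graded BGG reciprocity yields, after suitable normalization of grading shifts, a factorization of the shape $C_q=N_{q^{-1}}^T N_q$, and unitriangularity of $N_q$ with respect to the highest-weight order gives $\det N_q=1$, so that $C_q$ is invertible as a matrix of polynomials and $C_q^{-1}$ again has polynomial entries. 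Assuming Koszulity of ${\mathcal O}$ --- known for cyclic $\Gamma$ in large generality after Shan--Vasserot, Rouquier--Shan--Varagnolo--Vasserot, Webster, and Losev --- the group ${\rm Ext}^s_{\mathcal O}(L_i,L_j)$ is pure of internal degree $s$, and this identifies $(,)_q$, up to the substitution $q\mapsto -q$, with the pairing recorded by the principal submatrix of $C_q^{-1}$ indexed by the finite-dimensional simples. The entries of $N_q$ are parabolic Kazhdan--Lusztig-type polynomials, and the corresponding principal minors of $C_q^{-1}$ therefore factor as products of cyclotomic polynomials, which is exactly the desired conclusion.

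The main obstacles are twofold. First, aligning the formal $q$ in the definition of $(,)_q$ (which tracks homological degree in ${\rm Ext}$) with the internal grading $q$ used to form $C_q$ genuinely requires Koszulity, which is known in wide but not universal generality across $(c,k)$; without it one would need a spectral-sequence device degenerating at the first page. Second, the approach above is restricted to cyclic $\Gamma$. For general $\Gamma$ there is no obvious algebraic substitute for category ${\mathcal O}$, and the most natural route appears to be via the overarching conjectures of Section~6: if $(,)_q$ is identified with the $q$-Shapovalov form on the basic representation of the affine Lie algebra attached to $\Gamma$ via McKay, the conjecture follows from the classical nondegeneracy of that form away from roots of unity. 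A Calabi--Yau functional equation $G_q=(-q)^d G_{q^{-1}}^T$ coming from Serre duality in $H_{c,k}(\Gamma_n)$-mod would provide additional corroboration, forcing $\det G_q$ to be palindromic.
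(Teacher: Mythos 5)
This statement is a \emph{conjecture} in the paper, not a theorem; the paper offers no proof, only motivation (Subsection~6.4) and a worked example. The paper's motivation is precisely the last route you mention: identify $(,)_q$ with the restriction of the $q$-Shapovalov form on the basic representation $\bold V_q$ of $U_q(\widetilde{\g\oplus\CC})$, and invoke the fact that this form degenerates only at roots of unity. Your formal observations (entries of $G_q$ are polynomials, $G_0=I$, so $\det G_q$ has constant term $1$ and is generically nonzero) are correct and a useful preliminary reduction, and your Calabi--Yau/palindromicity remark is a reasonable sanity check, though it by itself says nothing about where the zeros lie.

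Your Koszulity-based sketch for cyclic $\Gamma$, however, has a genuine gap at the crucial step. Even granting graded BGG reciprocity, $\det N_q=1$, and Koszulity (so that $(,)_q$ matches the graded Cartan pairing up to $q\mapsto -q$), the deduction ``the entries of $N_q$ are parabolic Kazhdan--Lusztig-type polynomials, and therefore the relevant principal minors of $C_q^{-1}$ factor as products of cyclotomic polynomials'' does not follow. KL-type entries do not, in general, force minors of the inverse Cartan matrix to be cyclotomic; in the paper's $n=1$ example the cyclotomic factorization of $\det A_q$ is a specific ADE fact (Lusztig--Kostant), not a consequence of the entries being KL polynomials. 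You would need an additional argument --- for instance, a direct description of the matrix $C_q^{-1}$ restricted to finite-dimensional simples in terms of affine Weyl/Heisenberg combinatorics, or the level-rank/Frenkel--Kac structure of Section~6 --- to conclude. The paper itself points (Remark~4.8) to Gordon--Losev, who proved the cyclic case under parameter restrictions after the paper appeared; your general-$\Gamma$ fallback via the conjectural Shapovalov identification coincides with the paper's own heuristic and is not itself a proof, since that identification is exactly what is open.
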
 

The motivation for Conjecture \ref{rootun} is explained in Subsection \ref{pdmot}. 

\begin{example} 
Let $\Gamma\ne 1$, $n=1$, $\lambda=\omega_0$. 
Then by (\ref{resol}), the matrix of $(,)_q$ in the basis 
of simple modules is the q-deformed Cartan matrix 
$A_q$: $(a_q)_{ii}=1+q^2$, 
$(a_q)_{ij}=-q$ if $i$ is connected to $j$, and 
zero otherwise. 
It is well known from the work of Lusztig and Kostant that 
$\det(A_q)$ is a ratio of products of binomials of the form $1-q^l$.
This implies Conjecture \ref{rootun} in this case. 
Similarly one handles the case of general $\lambda$ and 
the case of $H_{c,0}(\Gamma_n)$, $n>1$. 
\end{example}

\begin{remark}
After this paper appeared online, Conjecture \ref{rootun} for cyclic 
groups $\Gamma$ under some restrictions on the parameters 
was proved by Gordon and Losev, \cite{GL}. 
\end{remark}

\section{The singular and aspherical hyperplanes} 

\subsection{Singular hyperplanes} 
I. Losev (\cite{Lo}, Theorem 1.4.2) showed that the algebra $H_{c,k}(\Gamma_n)$ 
is simple outside of a countable collection of hyperplanes 
in the space of parameters $c,k$. The following conjecture makes this statement more precise. 

\begin{conjecture}\label{sinhyp}
The algebra $H_{c,k}(\Gamma_n)$ is simple 
if and only if $(\lambda,k)$ does not belong to the hyperplanes
$$
E_{m,N}: km+N=0,\ m\in \Bbb Z,\ 2\le m\le n,\ N\in \Bbb Z,\ {\rm gcd}(m,N)=1, 
$$
and the hyperplanes 
$$
H_{\alpha,m,N}: (\lambda,\alpha)+km+N=0,
$$
where $m$ is an integer with $|m|\le n-1$, $\alpha$ is a root of $\g$, 
and $N\in \Bbb Z_{\ge 0}$. 
\end{conjecture}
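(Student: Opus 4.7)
The plan is to prove the two directions separately. For the direction that each listed hyperplane forces non-simplicity, I would use the parabolic restriction functor of Bezrukavnikov--Etingof, which matches two-sided ideals of $H_{c,k}(\Gamma_n)$ with ideals of symplectic reflection algebras attached to parabolic subgroups $\Gamma' \subset \Gamma_n$, together with induction to propagate ideals upward. Up to conjugacy the relevant parabolics are products of smaller symmetric groups $S_m$ and copies of $\Gamma$, so the problem reduces to two atomic cases. The hyperplanes $E_{m,N}$ would arise from the $S_m$-parabolic combined with the Berest--Etingof--Ginzburg theorem: the rational Cherednik algebra $H_{1,k}(S_m)$ fails to be simple precisely at $k=-N/m$ with $\gcd(m,N)=1$, acquiring a finite-dimensional quotient. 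The hyperplanes $H_{\alpha,m,N}$ would arise from the rank-one case $H_c(\Gamma) \cong \Pi_\lambda(Q)$ (Theorem \ref{moreq}) combined with the Crawley--Boevey--Holland criterion that $\Pi_\lambda(Q)$ is non-simple on the root hyperplanes $(\lambda,\alpha')+N'=0$ for a real positive root $\alpha'$ of $\widehat{\g}$. Writing $\alpha' = \alpha + p\delta$ for $\alpha$ a root of $\g$ and tracking how the parameter shifts under restriction from a parabolic whose rank-one $\Gamma$-factor sits inside an $S_{|m|+1}$-neighborhood controlled by the index $m$, the integer $N'$ becomes $N-km$, producing the stated form.

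For the converse, I would show that if $(\lambda,k)$ avoids all listed hyperplanes, then the appropriate category $\mathcal{O}$ is semisimple --- in the cyclic case directly, and for general $\Gamma$ through the Morita equivalence to the Gan--Ginzburg algebra or its quiver variety analog --- and that semisimplicity of $\mathcal{O}$ rules out proper two-sided ideals in $H_{c,k}(\Gamma_n)$. Semisimplicity of $\mathcal{O}$ is to be detected through the KZ-type functor of Shan--Vasserot into an affine/cyclotomic Hecke algebra at parameter $(\lambda,k)$, whose semisimplicity locus is governed by precisely the listed arrangement. Combined with the support-stratification principle (any proper two-sided ideal of $H_{c,k}(\Gamma_n)$ must annihilate some standard module of $\mathcal{O}$), this rules out proper ideals. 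Losev's theorem (\cite{Lo}, Theorem 1.4.2) already ensures that the reducibility locus lies in a countable union of hyperplanes; the real task here is the refined \emph{identification} of which hyperplanes actually occur, and the arithmetic conditions $\gcd(m,N)=1$, $|m|\le n-1$, $N\ge 0$ reflect the ranges of admissible parabolics and the integrality built into the Crawley--Boevey--Holland and Berest--Etingof--Ginzburg criteria.

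The hard part will be the converse direction for non-cyclic $\Gamma$, where no literal Cherednik category $\mathcal{O}$ is available and one must substitute either the Gan--Ginzburg category $\mathcal{O}$ via Theorem \ref{moreq} or a geometric replacement coming from quiver variety microlocalization. A delicate secondary point is pinning down the exact arithmetic form of the $H_{\alpha,m,N}$ hyperplanes: the shift $km$ and the constraint $N\ge 0$ must be extracted from iterating the Bezrukavnikov--Etingof parameter-shift formula, and one must verify that only finite roots $\alpha\in\g$ (not the imaginary root $\delta$) produce new hyperplanes --- the $\delta$-component gets absorbed into the integer $N$. A sensible order of attack would be: first $\Gamma=1$, where only $E_{m,N}$ appears and the statement reduces to a classical result for rational Cherednik algebras of type $A$; then $n=1$, where only $H_{\alpha,0,N}$ appears and the statement reduces to Crawley--Boevey--Holland; and finally the general case via the parabolic inductive framework.
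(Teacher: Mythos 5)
Note first that the statement you set out to prove is labeled a \emph{Conjecture} in the paper, and the paper does not prove it. Immediately after stating it, the paper records only two special cases in which it is known: (a) for $n=1$, where it reduces to the Crawley--Boevey--Holland criterion for simplicity of deformed preprojective algebras \cite{CBH}, and (b) for cyclic $\Gamma$ and arbitrary $n$, where it follows from a result of R.\ Vale (see \cite{BC}, Theorem 6.6) identifying simplicity of $H_{c,k}(\Gamma_n)$ with semisimplicity of the associated cyclotomic Hecke algebra, whose semisimplicity locus is precisely the complement of the listed hyperplanes. For general non-cyclic $\Gamma$ and $n>1$ the statement is open, so there is no ``paper's own proof'' against which to check your argument.

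That said, your outline is broadly consistent with the fragments of evidence the paper does supply. The rank-one and cyclic cases you invoke are exactly the two cases the paper cites, and for the direction that the hyperplanes force non-simplicity, the paper's proof of the companion Theorem \ref{conta} (about the aspherical rather than the singular locus) indeed proceeds by reduction to parabolic subgroups using Losev's results \cite{Lo}, in the spirit of what you call Bezrukavnikov--Etingof parabolic restriction. The genuine gaps are the ones you half-acknowledge. First, your converse direction hinges on a category $\mathcal{O}$ that simply does not exist for non-cyclic $\Gamma$; the Gan--Ginzburg Morita equivalence of Theorem \ref{moreq} hands you an isomorphic algebra, not a triangular decomposition or a KZ functor, so ``semisimplicity of $\mathcal{O}$'' is not an available criterion there. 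Second, even in the cyclic case your intermediate claim that every proper two-sided ideal must kill some standard module is not established by support stratification alone; Vale's argument goes through the double-centralizer property and the KZ functor, which is a different mechanism. Third, the arithmetic form of the hyperplanes $H_{\alpha,m,N}$ (the shift by $km$, the sign and the bound on $m$, the condition $N\ge 0$) must be extracted from explicit parameter bookkeeping under parabolic restriction; in the paper this computation is carried out only for the aspherical locus in Theorem \ref{conta}, and porting it to the singular locus is nontrivial. As a plan of attack your sketch is reasonable, but you should flag clearly that it is a strategy for an open conjecture and that the non-cyclic converse is the part for which no current technology is known to suffice.
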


Conjecture \ref{sinhyp} holds for $n=1$ (by \cite{CBH}), and also 
for any $n$ in the case of cyclic $\Gamma$. Indeed, in this case,  
the simplicity of $H_{c,k}$ is equivalent to the semisimplicity of the corresponding 
cyclotomic Hecke algebra (due to R. Vale; see \cite{BC}, Theorem 6.6), 
which is known to be semisimple exactly away from the above hyperplanes. 

\subsection{The aspherical locus}
It is also interesting to consider the aspherical locus. 
Namely, let $e_{\Gamma_n}=\frac{1}{|\Gamma_n|}\sum_{g\in \Gamma_n}g$ be the averaging element for $\Gamma_n$. 

\begin{definition} The aspherical locus ${\mathcal A}(\Gamma_n)$ 
is the set of $(\lambda,k)$ such that there exists a nonzero $H_{c,k}(\Gamma_n)$-module $M$ with $e_{\Gamma_n}M=0$
(i.e., equivalently, the two-sided ideal generated by $e_{\Gamma_n}$ in $H_{c,k}(\Gamma_n)$ is a proper ideal).  
\end{definition}

\begin{conjecture}\label{asph}
The aspherical locus is the union of the hyperplanes
$E_{m,N}$ for $1\le N\le m-1$ and the hyperplanes
$H_{\alpha,m,N}$ for $|m|\le n-1$ and 
$$
0\le N\le  \sqrt{n+\frac{m^2}{4}}+\frac{m}{2}-1.
$$
\end{conjecture}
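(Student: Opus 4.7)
The plan is to prove the conjecture by establishing both inclusions, exploiting the Morita equivalence with the Gan-Ginzburg algebra $A_{n,\lambda,k}(Q_\Gamma)$ from Theorem \ref{moreq}. Since the aspherical locus is Morita-invariant, one may toggle between the Cherednik description (where category $\mathcal{O}$ techniques apply) and the preprojective quiver description (where modules with controlled support are easier to construct).

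For the containment of the conjectured hyperplanes in the aspherical locus, I would construct an explicit aspherical module on each. The hyperplanes $E_{m,N}$ with $km+N=0$, $1\le N\le m-1$, $2\le m\le n$ are classical rational Cherednik hyperplanes for the symmetric group $S_m$, where aspherical modules are known from work of Dunkl-Opdam and Berest-Etingof-Ginzburg. These can be transplanted via a parabolic embedding of $H_{c,k}(\Gamma_m)$ into $H_{c,k}(\Gamma_n)$ coming from the stratum where $m$ of the $n$ marked points coincide, and then induced. The $H_{\alpha,m,N}$ hyperplanes are more delicate: using Crawley-Boevey's construction of finite-dimensional $\Pi_\lambda(Q_\Gamma)$-modules supported at positive roots $\alpha$ with $(\lambda,\alpha)=0$, one builds modules over $A_{n,\lambda,k}$ concentrated along the corresponding root direction, with the integer shift $km+N$ reflecting an affine translation of $\lambda$ together with a shift in $k$.

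For the reverse containment I would invoke the identification from Section 5 between the aspherical locus and the locus of infinite homological dimension of the spherical subalgebra. Off the conjectured hyperplanes the spherical subalgebra should be a smooth deformation quantization of $\text{Hilb}^n(\widetilde{\CC^2/\Gamma})$ in the sense of Bezrukavnikov-Okounkov, whence finite global dimension follows by standard deformation-quantization arguments; one must then match the walls of the singular locus of this family against the listed hyperplanes. Proposition \ref{clo} is convenient here because it ensures the aspherical locus is a genuine closed subvariety, so it suffices to check the claim on a Zariski dense subset of each complement stratum.

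The main obstacle is pinning down the sharp bound on $N$ in the family $H_{\alpha,m,N}$. Squaring the inequality shows it is equivalent to the combinatorial condition $(N+1)(N+1-m)\le n$, of the shape ``a certain rectangle fits inside a partition of $n$''. This bound should be interpreted as the range of grades at which the $\widehat{\g}$-weight $\alpha+m\delta$, shifted by $N\delta$, appears in the basic representation of $\widehat{\g}$ tensored with one copy of the Fock space --- exactly the module that the conjectures of Section 6 predict $\bigsqcup_n \mathcal{O}_{c,k}(\Gamma_n)$ categorifies. Making this precise will require either the full Section 6 decomposition conjecture (turning the bound into an explicit Shapovalov-form vanishing statement) or a direct Maya-diagram enumeration of weight multiplicities in the basic representation, and then matching this combinatorial envelope against the asymptotic wall estimates arising on the Bezrukavnikov-Okounkov side; I expect this matching to be the technically hardest step.
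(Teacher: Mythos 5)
This is stated as a \emph{conjecture} in the paper, and the paper does not prove it. What the paper actually establishes is only the easy inclusion (Theorem~\ref{conta}): every hyperplane in the list is contained in the aspherical locus. The reverse inclusion is attributed to \cite{DG} for cyclic $\Gamma$ and to \cite{CBH} for $n=1$, and is otherwise open. Your proposal treats the statement as a theorem and sketches both directions; most of the ``reverse'' direction (off the hyperplanes, the spherical subalgebra is a smooth deformation quantization of $\mathrm{Hilb}^n$ of the minimal resolution, hence of finite global dimension) is exactly the hard open content of the conjecture and has no argument behind it in the proposal or in the paper, so that part should be clearly flagged as conjectural rather than presented as a provable step.

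On the direction the paper does prove, your treatment of $E_{m,N}$ (reduce via parabolic restriction to the known aspherical locus for $S_n$) agrees with the paper, which invokes Losev's Theorem~1.2.1 and the parabolic $S_n\subset\Gamma_n$. For $H_{\alpha,m,N}$, however, your plan to start from Crawley-Boevey--Holland modules for $\Pi_\lambda(Q_\Gamma)$ and ``build modules concentrated along the root direction'' is not what is needed: CBH gives the $n=1$ case, and the required representations for general $n$ are the explicit wreath-product representations from \cite{EM} (and \cite{M,G}), of the form $\pi_{a,b}\otimes Y^{\otimes q}$ over $H_{c,k}(\Gamma_q)$ for a parabolic $\Gamma_q$, $q\le n$. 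The sharp bound on $N$ is then an elementary combinatorial computation: $e_{\Gamma_q}U=(\pi_{a,b}\otimes (Y^\Gamma)^{\otimes q})^{S_q}$ vanishes iff $N=\dim Y^\Gamma\le b-1$, and the constraint $q=b(b-m)\le n$ forces $b\le b_+=\sqrt{n+m^2/4}+m/2$. You correctly identify that the inequality rewrites as $(N+1)(N+1-m)\le n$, but then propose to derive it from the full Section~6 decomposition conjecture or a Maya-diagram computation; in the paper it is a direct two-line calculation once the \cite{EM} modules are on the table, and invoking the unproven Section~6 material there is circular. Finally, your invocation of Proposition~\ref{clo} is for the wrong direction: in the paper it serves to pass from \emph{generic} points of each listed hyperplane to \emph{all} points (since $Z_E$ is closed, once $E$ extends at a dense subset of a hyperplane it extends on the whole hyperplane), not to argue that the aspherical locus itself is closed (it is a priori only a countable union of the closed sets $Z_E$, so Proposition~\ref{clo} alone does not give closedness of the aspherical locus).
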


For cyclic $\Gamma$, this conjecture is proved in \cite{DG}. 
It is also easy to prove it for $n=1$ (in this case it follows from \cite{CBH}). 
Also, for any $\Gamma,n$ we have 

\begin{theorem}\label{conta} The hyperplanes specified 
in Conjecture \ref{asph} are contained in the aspherical locus. 
\end{theorem}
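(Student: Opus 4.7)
The plan is to exhibit an explicit nonzero aspherical $H_{c,k}(\Gamma_n)$-module on each hyperplane in the statement, built by parabolic induction from modules over smaller ``building block'' algebras. The two base libraries I would use are: (a) for the rational Cherednik algebra $H_{1,k}(S_m)$ at $k=-N/m$ with $\gcd(m,N)=1$ and $1\le N\le m-1$, the classical finite-dimensional irreducible aspherical representation (cf.\ Berest-Etingof-Ginzburg); and (b) for $H_c(\Gamma)$ at any root hyperplane $(\lambda,\alpha)=0$ with $\alpha$ a positive real root of $\widehat{\g}$, the aspherical module transported via the Gan-Ginzburg Morita equivalence from the aspherical $\Pi_\lambda(Q_\Gamma)$-module of \cite{CBH}.

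The propagation mechanism is the Bezrukavnikov-Etingof parabolic induction functor, extended to the wreath-product setting by Losev \cite{Lo}. Parabolic subgroups $W_P\subset\Gamma_n$ have the form $\Gamma_{n_0}\times S_{m_1}\times\cdots\times S_{m_p}$ with $n_0+\sum m_j=n$ (stabilizers of generic configurations in $V_n$ with $n_0$ coordinates at the origin and $p$ groups of coincident nonzero coordinates), yielding a parabolic subalgebra $H_P = H_{c,k}(\Gamma_{n_0})\otimes\bigotimes_j H_{1,k}(S_{m_j})$. At the level of the group algebra $\CC[\Gamma_n]\subset H_{c,k}(\Gamma_n)$, induction preserves asphericity by Frobenius reciprocity: if the $H_P$-module has no $W_P$-invariants then the induced $H_{c,k}(\Gamma_n)$-module has no $\Gamma_n$-invariants.

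With these tools in hand, the listed hyperplanes are treated in order of increasing difficulty. For $E_{m,N}$, use $W_P=S_m\times\Gamma_{n-m}$ and induce the tensor product of the aspherical $H_{1,k}(S_m)$-module from (a) with the trivial $H_{c,k}(\Gamma_{n-m})$-module. For $H_{\alpha,0,0}$, use $W_P=\Gamma\times\Gamma_{n-1}$ and induce the aspherical $H_c(\Gamma)$-module from (b) tensored with the trivial module. The sharp bound $n\ge(N+1)(N+1-m)$, which is equivalent to the stated bound on $N$, is the assertion that a parabolic carrying a suitably rectangular aspherical module of approximate size $(N+1)(N+1-m)$ fits inside $\Gamma_n$. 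I would build this ``rectangular'' aspherical module inside $H_{c,k}(\Gamma_{(N+1)(N+1-m)})$ by combining ingredients (a) and (b) and applying the Rouquier/Heckman-Opdam shift functor to realize the $km+N$ translate, modeled on the cyclic-group construction of \cite{DG}.

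The main obstacle is this last construction: producing the rectangular aspherical module of $H_{c,k}(\Gamma_{(N+1)(N+1-m)})$ on the hyperplane $(\lambda,\alpha)+km+N=0$ and verifying that its parabolic induction to $H_{c,k}(\Gamma_n)$ is nonzero and still aspherical. For cyclic $\Gamma$ the combinatorics are carried out in \cite{DG} via the cyclotomic KZ-functor; for general $\Gamma$ the natural replacement is the affine KZ-functor of Varagnolo-Vasserot, under which asphericity of $H_{c,k}(\Gamma_n)$ should translate to non-semisimplicity of a corresponding affine Hecke algebra, whose singular locus is explicitly known and matches the desired union of hyperplanes.
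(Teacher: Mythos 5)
Your overall reduction strategy is the same as the paper's---appeal to Losev's results on parabolic induction to propagate asphericity, and handle the $E_{m,N}$ hyperplanes via the $S_m$ (or $S_n$) parabolic---but the heart of the theorem, the $H_{\alpha,m,N}$ hyperplanes with the sharp bound on $N$, is left as an admitted ``main obstacle'' that you do not actually resolve. The paper closes this with a concrete input you do not invoke: the explicit finite-dimensional representations of $H_{c,k}(\Gamma_q)$ constructed in \cite{EM}. These have the form $\pi_{a,b}\otimes Y^{\otimes q}$ as $\Gamma_q$-modules, where $\pi_{a,b}$ is the rectangular $S_q$-representation ($q=ab$) and $Y$ is an irreducible $\Gamma$-representation with $\dim Y^\Gamma=N$. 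The asphericity condition $e_{\Gamma_q}U=0$ reduces to the elementary combinatorial fact that $(\pi_{a,b}\otimes(Y^\Gamma)^{\otimes ab})^{S_{ab}}=0$ iff $N\le b-1$, and the bound $0\le N\le \sqrt{n+m^2/4}+m/2-1$ (equivalently your $(N+1)(N+1-m)\le n$) arises from maximizing $b$ subject to $b(b-m)\le n$ with $b-a=m$. Your proposed substitutes---Rouquier/Heckman--Opdam shift functors modeled on \cite{DG}, or an affine KZ functor for general $\Gamma$---are speculative and do not obviously produce these modules outside the cyclic case; as written this step is not a proof.

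There is a second genuine gap: the \cite{EM} construction (and the one you gesture at) only supplies aspherical modules at \emph{generic} points of each hyperplane $H_{\alpha,m,N}$. To conclude asphericity at \emph{every} point of the hyperplane, the paper invokes Proposition \ref{clo}, which says that the locus of parameters where a fixed $\Gamma_n$-module extends to an $H_{c,k}(\Gamma_n)$-module is Zariski closed; this allows passage from the generic point to an arbitrary one. Your proposal omits this degeneration argument entirely, so even granting the construction step it would only establish generic asphericity. A further small point: the Frobenius-reciprocity heuristic you state is at the level of group algebras, not of the Bezrukavnikov--Etingof parabolic induction functor (which involves completions and the isomorphism theorem). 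The correct and clean statement is Losev \cite{Lo}, Theorem~1.2.1: a point aspherical for some parabolic subgroup of $\Gamma_n$ is aspherical for $\Gamma_n$; that is what the paper cites and you should too.
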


\begin{proof}
Let us start with the hyperplanes $k=-N/m$. 
It follows from \cite{Lo}, Theorem 1.2.1, that it suffices to show that 
a point on this hyperplane gives rise to an aspherical point 
for some parabolic subgroup $W\subset \Gamma_n$. 
Take the parabolic subgroup $S_n\subset \Gamma_n$ (stabilizer of a point $(v,...,v)$, 
where $0\ne v\in V$). It is well known that $k=-N/m$ with $N,m$ as in the theorem are 
aspherical for $S_n$. So we are done in this case. 

Consider now the hyperplanes $H_{\alpha,m,N}$. 
Let us first show that the generic point of  
each hyperplane is contained in the aspherical locus. 
By \cite{Lo}, to this end, it suffices to show that for a generic point 
of each hyperplane there is a finite dimensional 
representation of $H_{c,k}(\Gamma_q)$ which 
is killed by $e_{\Gamma_q}$ for some $q\le n$. 
Such a representation was constructed in the paper \cite{EM}. 
Namely, for a positive real affine root $\tilde\alpha$ along the hyperplane 
$$
(\lambda,\tilde\alpha)+k(b-a)=0
$$
there is a representation $U$ of $H_{c,k}(\Gamma_q)$
which, as a $\Gamma_q$-module, looks like 
$\pi_{a,b}\otimes Y^{\otimes q}$, where 
$Y$ is an irreducible representation of $\Gamma$, and 
$\pi_{a,b}$ is the irreducible representation of $S_q$ 
whose Young diagram is a rectangle of width $a$ and height 
$b$ (so $q=ab$). The space $e_{\Gamma_q}U$ of $\Gamma_q$-invariants in $U$ is 
$e_{\Gamma_q}U=(\pi_{a,b}\otimes (Y^\Gamma)^{\otimes q})^{S_q}$. 

Let $\tilde\alpha=\alpha+N\delta$. Then $N=\dim Y^\Gamma$.
It is well known that $e_{\Gamma_q}U=0$ if and only if $N\le b-1$. 
Also, the hyperplane equation now looks like 
$$
(\lambda,\alpha)+k(b-a)+N=0.
$$

The number $m=b-a$ can take values between $1-n$ and $n-1$, i.e. $|m|\le n-1$. 
However, we have a restriction that the area of the rectangle is $q$: 
$1\le ab=b(b-m)=q\le n$. 

The larger root of the equation $b(b-m)=n$ with respect to $b$ is
$$
b_+= \sqrt{n+\frac{m^2}{4}}+\frac{m}{2}, 
$$
and the smaller root of this equation is negative. 
Therefore, if 
$$
0\le N\le b_+-1,
$$
then one can take $b=[b_+]$, and we have 
$1\le b\le b_+$, and also $b-m\ge 1$ (as $n\ge |m|+1$), so we obtain an aspherical representation. 
But this is exactly the condition in the theorem. 

Finally, we should separately consider the case $N=0$ and $\alpha<0$, 
i.e. $(\lambda,\alpha)+km=0$ (as in this case $\tilde\alpha$ is not a positive root). 
But in this case we can replace $\alpha$ with $-\alpha$ and $m$ with $-m$.

To pass from the generic point on the hyperplane to an arbitrary
point, one can use Proposition \ref{clo}. 

The theorem is proved. 
\end{proof}

\subsection{Aspherical locus and homological dimension}

One of the reasons aspherical values are interesting is the following theorem, 
connecting them to homological dimension. 

Let $V$ be a finite dimensional symplectic vector space, 
and $G\subset Sp(V)$ a finite subgroup.
Let $t\in \Bbb C$, $c$ be a conjugation-invariant $\Bbb C$-valued 
function on the set of symplectic reflections in $G$, and let 
$H=H_{t,c}(G,V)$ be the symplectic reflection algebra attached to $G,V,t,c$ (\cite{EG}). 
Let $e=e_G$ be the symmetrizing idempotent of $G$,
and $eHe$ be the corresponding spherical subalgebra. 

The proof of the following theorem was explained to me by R. Bezrukavnikov. 

\begin{theorem}\label{homde}
The algebra $eHe$ has finite homological dimension if and only if 
$HeH=H$ (in which case $eHe$ is Morita 
equivalent to $H$, so its homological dimension is $\dim V$).  
\end{theorem}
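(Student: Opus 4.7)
The plan is to split the argument into two directions. The ``if'' direction is Morita-theoretic formality; the ``only if'' direction carries all the weight.

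For the ``if'' direction: assuming $HeH=H$, I would write $1=\sum_{i=1}^n a_i e b_i$ with $a_i,b_i\in H$, producing a split surjection $(He)^{\oplus n}\twoheadrightarrow H$, $(x_i)\mapsto \sum_i x_i b_i$, split by $h\mapsto(ha_i e)_i$. Thus $He$ is a progenerator of $H$-mod and $\mathrm{End}_H(He)^{\mathrm{op}}=eHe$ is Morita equivalent to $H$. Since Morita equivalence preserves global dimension and $H$ has global dimension $\dim V$ (by PBW and the fact that $\mathbb{C}[V]\rtimes G$ has global dimension $\dim V$ as a skew group algebra over a polynomial ring with $|G|$ invertible), the same holds for $eHe$.

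For the ``only if'' direction I would argue by contradiction: assume $eHe$ has finite global dimension but $HeH\subsetneq H$. Then $\bar H:=H/HeH$ is a nonzero $H$-bimodule, and since $e$ maps to zero in $\bar H$ we have $e\bar H=\bar H e=0$; any nonzero finitely generated $\bar H$-module $M$ gives a nonzero $H$-module with $eM=0$. The framework is derived Morita: set $G=e(-):D^b(H\text{-mod})\to D^b(eHe\text{-mod})$ (already exact) and $LF=He\otimes^L_{eHe}-$. The identity $e(He\otimes^L_{eHe} N)=N$ yields $G\circ LF=\mathrm{id}$, so $LF$ is fully faithful, and the finite global dimension of $eHe$ ensures $LF$ maps $D^b$ to $D^b$. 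For the chosen $M$ and any $N\in D^b(eHe)$,
$$\mathrm{Ext}^i_H\bigl(LF(N),M\bigr)\cong\mathrm{Ext}^i_{eHe}(N,eM)=0,$$
so $M$ is right-orthogonal in $D^b(H)$ to the essential image of $LF$.

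The main step is to show that this essential image is all of $D^b(H)$; granted this, $M$ itself lies in the image of $LF$, forcing $M\simeq LF(eM)=LF(0)=0$, the desired contradiction. Essential surjectivity reduces to the vanishing, in $D^b(H\otimes H^{\mathrm{op}})$, of the cone
$$C:=\mathrm{cone}\bigl(He\otimes^L_{eHe}eH\to H\bigr),$$
whose amplitude is bounded by the global dimension of $eHe$ and which satisfies $eC=Ce=0$ by direct computation.

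The main obstacle is showing $C\simeq 0$ itself, and this is not formal: for a general ring $R$ with idempotent $e$ (for instance an upper-triangular matrix algebra), finite global dimension of $eRe$ need not imply $ReR=R$. The specific structure of the symplectic reflection algebra is essential. The key input is to exploit the PBW filtration together with the symplectic geometry: $\mathrm{gr}(eHe)=\mathbb{C}[V]^G$ generally has infinite global dimension (since $G\subset\mathrm{Sp}(V)$ contains no pseudoreflections), so finite global dimension of $eHe$ is a strong quantum non-degeneracy condition that rigidifies the situation. Combining this with the Auslander/Cohen--Macaulay properties of $H$ inherited from $\mathbb{C}[V]\rtimes G$ then forces $C=0$ via a delicate left/right bimodule argument; this is the step that resists formalization and uses the full SRA structure.
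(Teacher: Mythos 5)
Your ``if'' direction is fine and essentially matches the paper. The issue is the ``only if'' direction, where you correctly identify the difficulty but leave it unresolved: you reduce to showing that the cone $C$ of $He\otimes^L_{eHe}eH\to H$ vanishes, observe that this cannot be formal (your upper-triangular matrix counterexample is apt), and then wave at ``Auslander/Cohen--Macaulay properties\dots via a delicate left/right bimodule argument; this is the step that resists formalization.'' That is precisely the gap.

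The paper closes it by a cleaner, non-derived mechanism, stated as a separate Proposition. Rather than attacking $C$, one works with the right adjoint $F(N)=\mathrm{Hom}_{eHe}(eH,N):eHe\text{-mod}\to H\text{-mod}$, and shows directly that $eH$ is a \emph{projective} left $eHe$-module once $eHe$ has finite global dimension. The inputs (all from \cite{EG}, Theorem~1.5) are: $eHe$ is Gorenstein with invertible dualizing bimodule $\omega_{eHe}$; $eH$ is Cohen--Macaulay of dimension $0$ over $eHe$; and the natural map $H\to\mathrm{End}_{eHe}(eH)$ is an isomorphism. Cohen--Macaulayness of dimension $0$ gives $\mathrm{Ext}^i_{eHe}(eH,\omega_{eHe})=0$ for $i>0$; invertibility of $\omega_{eHe}$ transports this to all projectives $P$ (each $P$ is a summand of some $Y\otimes\omega_{eHe}$); and finite global dimension of $eHe$ then propagates, by induction on the length of a finite projective resolution, to $\mathrm{Ext}^i_{eHe}(eH,N)=0$ for every $N$ and $i>0$. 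Hence $eH$ is projective, so together with $H\cong\mathrm{End}_{eHe}(eH)$ it implements a genuine Morita equivalence, forcing $HeH=H$. This is the concrete content you were groping for: it is not a delicate bimodule computation inside the derived category, but a finite-global-dimension bootstrap on the $\mathrm{Ext}$-vanishing guaranteed by the Gorenstein/CM structure. You should either carry out this argument or, if you insist on the derived-Morita framing, show that these same three hypotheses imply $C\simeq 0$ (projectivity of $eH$ over $eHe$ and $H\cong\mathrm{End}_{eHe}(eH)$ do give $He\otimes_{eHe}eH\cong H$ without higher Tor, which is exactly $C\simeq 0$). As written, however, your proof does not establish the theorem.
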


In other words, $(t,c)$ belongs to the aspherical locus if and only if $eHe$ has 
infinite homological dimension. 

The proof of Theorem \ref{homde} is based on the following general proposition.

\begin{proposition}\label{gene}
Let $A$ be an associative algebra with an idempotent $e$.
Assume that:

(i) $A$ has finite (left) homological dimension;

(ii) $eAe$ is a Gorenstein algebra 
(i.e., its dualizing module 
is an invertible bimodule), and $eA$ is a 
Cohen-Macaulay module over $eAe$ of dimension 0; 
and 

(iii) the natural map $\phi: A\to {\rm End}_{eAe}(eA)$ is an isomorphism. 

Then $eAe$ has finite homological dimension if and only if 
$AeA=A$. 
\end{proposition}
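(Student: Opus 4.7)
The plan is to show that, under assumptions (ii) and (iii), finite global dimension of $eAe$ forces $eA$ to be a progenerator for $eAe\text{-mod}$; combined with the identification $A={\rm End}_{eAe}(eA)$ from (iii), this yields a Morita equivalence between $A$ and $eAe$, and hence $AeA=A$. The converse direction will be the classical observation that a full idempotent produces a Morita equivalence that preserves homological dimension.

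First I would dispose of the easier ``if'' direction: if $AeA=A$, then $Ae$ is a progenerator in $A\text{-mod}$, and the resulting Morita equivalence $A\text{-mod}\simeq eAe\text{-mod}$ transports the finite homological dimension of $A$ provided by (i) to $eAe$.

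For the ``only if'' direction, assume $eAe$ has finite global dimension. The technical core is to show that $eA$ is a finitely generated projective left $eAe$-module. By (ii), $eA$ is maximal Cohen--Macaulay over $eAe$, i.e.\ ${\rm Ext}^i_{eAe}(eA,eAe)=0$ for all $i>0$. The finite global dimension of $eAe$ forces $eA$ to have finite projective dimension, and then the standard fact that over a Gorenstein algebra of finite global dimension every maximal Cohen--Macaulay module is projective---proved by taking a finite projective resolution, dualizing against $eAe$ (exact by the Ext vanishing), and reflecting back using reflexivity of MCM modules over a Gorenstein ring---yields the projectivity of $eA$. I expect this step to be the main technical input, though it is a fairly routine piece of homological algebra.

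With projectivity in hand, $eA$ is automatically a generator of $eAe\text{-mod}$: right multiplication by the orthogonal idempotents $e$ and $1-e$ gives a decomposition $eA=eAe\oplus eA(1-e)$ of left $eAe$-modules, exhibiting $eAe$ as a direct summand of $eA$. Thus $eA$ is a progenerator, and assumption (iii) identifies $A$ with ${\rm End}_{eAe}(eA)$; classical Morita theory then produces an equivalence $A\text{-mod}\simeq eAe\text{-mod}$ given by $M\mapsto eM$. In particular, every $A$-module $M$ with $eM=0$ vanishes. Applying this to $M=A/AeA$, which satisfies $eM=0$ because $eA\subset AeA$, one concludes $A/AeA=0$, i.e.\ $AeA=A$, as required.
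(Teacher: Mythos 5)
Your proof has the same top-level architecture as the paper's: reduce everything to showing that $eA$ is a finitely generated projective left $eAe$-module, observe that it is automatically a generator (via the decomposition $eA=eAe\oplus eA(1-e)$, which you make explicit and the paper leaves implicit), so that with (iii) it is a progenerator with endomorphism ring $A$; the resulting Morita equivalence $M\mapsto eM$ then kills $A/AeA$. Where you diverge is in the proof of projectivity of $eA$. The paper proceeds in one clean induction: the vanishing of ${\rm Ext}^{>0}_{eAe}(eA,\omega_{eAe})$ and the invertibility of $\omega_{eAe}$ give ${\rm Ext}^{>0}_{eAe}(eA,P)=0$ for every projective $P$, and then dimension-shifting along a \emph{finite} projective resolution of an arbitrary $eAe$-module $N$ (which exists by finite global dimension) yields ${\rm Ext}^{>0}_{eAe}(eA,N)=0$ for all $N$, i.e.\ $eA$ is projective. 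You instead invoke the fact that over a Gorenstein ring of finite global dimension a maximal Cohen--Macaulay module is projective, sketched via ``dualize a finite projective resolution and reflect back using reflexivity.'' This route works but is not quite self-contained as stated: the dualized complex is exact, showing $eA^*=\Hom_{eAe}(eA,eAe)$ is projective by peeling off terms, but returning to the conclusion that $eA$ itself is projective requires the reflexivity $eA\xrightarrow{\sim}eA^{**}$, which itself needs a small argument using the Ext vanishing (it does not come for free). The paper's variant sidesteps reflexivity entirely by dimension-shifting on the \emph{target} $N$ rather than on $eA$, which is the cleaner way to close this step. Also note that you dualize against $eAe$ while the paper's hypothesis (ii) is stated with respect to $\omega_{eAe}$; since $\omega_{eAe}$ is an invertible bimodule (hence projective on each side and generating), the two Ext-vanishing conditions are equivalent, but this equivalence is worth a line if you want to quote (ii) as written.
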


\begin{proof}
If $AeA=A$ then the functor $N\mapsto eN$ is an equivalence 
of categories $A-{\rm mod}\to eAe-{\rm mod}$
(i.e., $A$ and $eAe$ are Morita equivalent),
so the homological dimension of $eAe$ equals the homological 
dimension of $A$, i.e., is finite (so for this part of the proposition 
we only need condition (i)). It remains to show that if the 
homological dimension of $eAe$ is 
finite then $AeA=A$. 

Assume that $eAe$ has finite homological dimension. 
Consider the functor $F: eAe-{\rm mod}\to A-{\rm mod}$ 
defined by 
$$
F(N)={\rm Hom}_{eAe}(eA,N).
$$
We will show that conditions (i) and (ii) imply that 
the functor $F$ is exact, i.e., $eA$ is a projective $eAe$-module. 
Taking into account condition (iii), this means that 
$eA$ defines a Morita equivalence between $eAe$
and $A$, so $AeA=A$, as desired. 

To show that $F$ is exact, 
denote by $\omega_{eAe}$ the dualizing module 
for $eAe$, and note that by condition (ii), 
$$
{\rm Ext}^i_{eAe}(eA,\omega_{eAe})=0,\ i>0,
$$
which implies that for any projective $eAe$-module $P$, we have 
$$
{\rm Ext}^i_{eAe}(eA,P)=0,\ i>0
$$
(as $\omega_{eAe}$ is an invertible bimodule, and hence 
$P$ is a direct summand in $Y\otimes\omega_{eAe}$, where $Y$ is
a vector space). 
Now by induction in $m$, from the long exact sequence of cohomology 
it follows that if $N$ has a projective resolution of length $m$, 
then 
$$
{\rm Ext}^i_{eAe}(eA,N)=0,\ i>0.
$$
But the assumption that $eAe$ has finite homological dimension
implies that {\it any} $eAe$-module $N$ has 
a finite projective resolution. Thus, the last equality holds 
for any $eAe$-module $N$, which implies that the functor $F$ is exact. 
The proposition is proved. 
\end{proof} 

\begin{proof} (of Theorem \ref{homde}) 
To deduce Theorem \ref{homde} from Proposition \ref{gene}, 
it suffices to note that conditions (i)-(iii) 
of Proposition \ref{gene} 
hold for symplectic reflection algebras
(see \cite{EG}, Theorem 1.5).  
\end{proof}

Note that if $t=0$ then $eHe$ is commutative, and 
Theorem \ref{homde} reduces to
the statement that ${\rm Spec}(eHe)$
is smooth if and only if $HeH=H$, which is proved in \cite{EG}.
Moreover, by localizing $H$, one obtains 
a stronger result (also from \cite{EG}), stating 
that the smooth and Azumaya loci for $H$ coincide.

More generally, we have the following corollary 
of Proposition \ref{gene}.

Let $A$ be an associative algebra with 
an idempotent $e$, such that $eAe$ is a commutative 
finitely generated Gorenstein algebra, 
and $eA$ is a finitely generated 
Cohen-Macaulay $eAe$-module. 
Let $X={\rm Specm}(eAe)$, let $U_{\rm sm}\subset X$ 
be the smooth locus, and let $U_{\rm Az}\subset X$ 
be the Azumaya locus (namely, the set of points 
where $eA$ is locally free over $eAe$). 

\begin{corollary} If $A$ has finite homological dimension and 
the natural map $\phi: A\to {\rm End}_{eAe}(eA)$ is an 
isomorphism, then $U_{\rm sm}=U_{\rm Az}$. 
\end{corollary}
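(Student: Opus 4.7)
The plan is to establish both inclusions $U_{\rm sm}\subseteq U_{\rm Az}$ and $U_{\rm Az}\subseteq U_{\rm sm}$ by applying Proposition \ref{gene} at the local ring of each closed point $x\in X$. Let $\mathfrak{m}\subset eAe$ be the maximal ideal of $x$ and set $R:=(eAe)_{\mathfrak{m}}$. The Gorenstein and Cohen--Macaulay hypotheses both localize, so $R$ is Gorenstein local and $(eA)_\mathfrak{m}$ is a maximal Cohen--Macaulay $R$-module; and since endomorphisms of a finitely presented module commute with localization of the base, the iso $\phi$ descends to an iso $\phi_\mathfrak{m}:A_x\xrightarrow{\sim}\mathrm{End}_R((eA)_\mathfrak{m})$, where $A_x$ denotes the appropriate localization of $A$ along $eAe\setminus\mathfrak{m}$.

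For $U_{\rm sm}\subseteq U_{\rm Az}$ I would rerun the vanishing step of Proposition \ref{gene} at $R$. At a smooth point $x$, $R$ is a regular local ring, hence of finite global dimension. The Gorenstein invertibility of $\omega_R$ together with the MCM property of $(eA)_\mathfrak{m}$ gives $\mathrm{Ext}^{>0}_R((eA)_\mathfrak{m},P)=0$ for every projective $P$; induction on projective dimension then extends the vanishing to every $R$-module. Hence $(eA)_\mathfrak{m}$ is a finitely generated projective, and so free, module over the local ring $R$, which is exactly the condition $x\in U_{\rm Az}$.

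For $U_{\rm Az}\subseteq U_{\rm sm}$ I would use Morita equivalence. At an Azumaya point, $(eA)_\mathfrak{m}$ is free of some rank $n\ge 1$ over $R$, and $\phi_\mathfrak{m}$ identifies $A_x$ with $M_n(R)$. Being a nonzero free module, $(eA)_\mathfrak{m}$ is a progenerator, so $M_n(R)$ and $R$ are Morita equivalent and share their global dimension. Granting that finite global dimension transfers from $A$ to $A_x$, the commutative Noetherian local ring $R$ then has finite global dimension, and by Auslander--Buchsbaum--Serre is regular, giving $x\in U_{\rm sm}$.

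The main obstacle is this last transfer $\mathrm{gl.dim}(A)<\infty\Rightarrow\mathrm{gl.dim}(A_x)<\infty$, because $eAe$ is not central in $A$ and $A\to A_x$ is therefore not a central ring localization. My resolution would use that although $eAe\setminus\mathfrak{m}$ is not central in $A$, it is central in the subring $eAe$, so $R$ is a flat $eAe$-module, and consequently $A_x=A\otimes_{eAe}R$ is flat as a one-sided $A$-module. Flat ring extensions do not increase global dimension, so $\mathrm{gl.dim}(A_x)\le\mathrm{gl.dim}(A)<\infty$, closing the argument.
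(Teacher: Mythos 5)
Your route — localize at each $\mathfrak{m}\subset eAe$, rerun the $\mathrm{Ext}$-vanishing argument of Proposition \ref{gene} to get $U_{\rm sm}\subseteq U_{\rm Az}$, and use $\phi_\mathfrak{m}\colon A_\mathfrak{m}\cong M_n(R)$ together with Auslander--Buchsbaum--Serre for the reverse inclusion — is the right one, and the first inclusion is fine as you wrote it. The gap sits exactly where you suspected: making $A_\mathfrak{m}$ a well-defined ring of finite global dimension. Your proposed fix does not close it.

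The multiplicative set $eAe\setminus\mathfrak{m}$ does not act unitally on $A$ via the literal inclusion $eAe\subset A$: the identity of $eAe$ is $e\neq 1_A$, so $A$ is not a unital $eAe$-module in that way, and $A\otimes_{eAe}R$ is not well-defined; even formally, a non-central $eAe$-action would give the tensor product no ring structure. The repair is already latent in the hypotheses: $eAe$ is commutative, so the multiplication map $eAe\to {\rm End}_{eAe}(eA)$ lands in the \emph{center} of ${\rm End}_{eAe}(eA)$, and pulling it back along $\phi^{-1}$ yields a unital central embedding $eAe\to Z(A)$. With this structure $A_\mathfrak{m}$ is an honest central Ore localization of $A$, flat over $A$. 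Separately, the cited general fact ``flat ring extensions do not increase global dimension'' is false: $\mathbb{Z}[x]\subset \mathbb{Z}[x,y]$ is free, hence flat, yet raises global dimension from $2$ to $3$. What you actually need, and what does hold once the centrality is in place, is the narrower statement that a central localization does not increase global dimension: any $A_\mathfrak{m}$-module, restricted to $A$, admits a projective $A$-resolution of length at most the global dimension of $A$, and localizing that resolution gives a projective $A_\mathfrak{m}$-resolution of the same length.
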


This corollary generalizes a result of Tikaradze \cite{T},
who proved, in particular, that $U_{\rm sm}=U_{\rm Az}$ for symplectic reflection algebras 
in positive characteristic.  

\section{Relation to the representation theory of affine Lie algebras}

\subsection{The setup}
The conjectural relation of the questions in Sections 3, 4 to representations of 
affine Lie algebras is based on the well known fact that 
the graded space $\oplus_{n\ge 0}K_0({\rm Rep} \Gamma_n)_{\Bbb C}$ 
has the same Hilbert series as ${\mathcal F}^{\otimes r+1}$, 
where ${\mathcal F}=\Bbb C[x_1,x_2,...]$ is the Fock space, and $r$ 
is the number of nontrivial conjugacy classes in $\Gamma$. 

Consider the Lie algebra $\widetilde{\g\oplus \Bbb C}$. It is spanned by elements 
$bt^j$, $b\in \g$, $j\in \Bbb Z$; $a_j$, $j\in \Bbb Z\setminus 0$; the scaling element $D$; and the central element $K$. 
The elements $a_i$ commute with $bt^j$, and we have 
$$
[D,a_i]=ia_i,\ [D,bt^i]=ibt^i,
$$
and
$$
[a_i,a_j]=i\delta_{i,-j}K,\ [at^i,bt^j]=[a,b]t^{i+j}+i\delta_{i,-j}(a,b)K,
$$ 
where $(,)$ is the invariant inner product on $\g$ normalized so that $(\alpha,\alpha)=2$ for all 
roots $\alpha$.

Let ${\bold V}_0$ be the basic representation of $\widetilde{\g}$ at level 1 (i.e., with highest weight $\omega_0$);
then ${\bold V}:={\bold V}_0\otimes {\mathcal F}$ is an irreducible representation of 
$\widetilde{\g\oplus \Bbb C}$, and by the Frenkel-Kac theorem (see \cite{K}),
the space ${\mathcal F}^{\otimes r+1}$ 
can be viewed as the sum of the weight subspaces of weight $\omega_0-n\delta$
in this representation; more precisely, one has 
$$
{\bold V}={\mathcal F}^{\otimes r+1}\otimes \Bbb C[Q_\g],
$$
where $Q_\g$ is the root lattice of $\g$, and for $\beta\in Q_\g$, 
${\mathcal F}^{\otimes r+1}\otimes e^\beta$ is the sum of weight subspaces of weight $\omega_0-
(n+\frac{\beta^2}{2})\delta+\beta$. 
So for either of the two filtrations defined in the previous sections 
there is a vector space isomorphism 
$$
{\rm gr}K_0({\rm Rep} \Gamma_n)_{\Bbb C}\cong {\bold V}[\omega_0-n\delta],
$$
(cf. \cite{FJW}) and the problem of finding the Poincare polynomials of the filtrations can be reformulated as the problem 
of describing the corresponding grading on ${\bold V}[\omega_0-n\delta]$. Below, we will state a conjecture of what 
this grading is expected to be. 

Fix $(\lambda,k)$. Define the Lie subalgebra $\ag=\ag(\lambda,k)$ of $\widetilde{\g\oplus \C}$ 
generated by the Cartan subalgebra $\widetilde{\h}$ and

1) $a_{ml}$, $l\in \Bbb Z\setminus 0$, if a singular hyperplane $E_{m,N}$ contains 
$(\lambda,k)$; 

2) $e_{\alpha+m\delta}, e_{-\alpha-m\delta}$ for each singular hyperplane $H_{\alpha,m,N}$ containing 
$(\lambda,k)$. 

Note that this Lie algebra inherits a polarization from $\widetilde{\g\oplus \Bbb C}$. 
Also note that it is finite dimensional if and only if $k\notin \Bbb Q$.

The above grading on ${\rm gr}K_0({\rm Rep}\Gamma_n)_{\Bbb C}$ can conjecturally be described in terms of the decomposition
of the representation ${\bold V}$ into $\ag$-isotypic components, in a way described below. 

We will say that $\mu\in \widetilde{\h}^*$ is a dominant integral weight for $\ag$ if 
$(\mu,\beta)\in \Bbb Z_+$ for any positive root $\beta$ of $\ag$.  
Denote the set of such weights by $P_+^\ag$. 
For $\mu\in P_+^\ag$, let $L_\mu$ be the corresponding irreducible 
integrable $\ag$-module. It is clear that 
$$
{\bold V}|_\ag\cong \oplus_{\mu\in P_+^\ag}L_\mu\otimes \Hom_\ag(L_\mu,\bold V).
$$

Let $\ag'\subset \ag$ be the subalgebra 
generated by the elements $e_{\alpha+m\delta}, e_{-\alpha-m\delta}$ for $(\lambda,k)\in H_{\alpha,m,N}$.
Note that it is a Kac-Moody algebra (finite dimensional or affine). 

\subsection{The main conjecture (the case of irrational $k$)}

Suppose first that $k$ is irrational. 
In this case, the Lie algebra $\ag$ is a finite
dimensional Levi subalgebra of $\widetilde{\g}$. 
Namely, $\ag$ is generated by $\widetilde{\h}$ and
$e_{\alpha+m_\alpha\delta}=e_\alpha t^{m_\alpha}$ 
for $\alpha$ running through some root subsystem 
$R'$ of the root system $R$ of $\g$. 

Our main conjecture in the case of irrational $k$ 
is the following. 

\begin{conjecture}\label{main1} For either of the filtrations $F_\bullet$
and ${\bold F}_\bullet$, there exists an isomorphism of vector spaces 
\begin{equation}\label{iso}
{\rm gr}_i K_0({\rm Rep}\Gamma_n)_{\Bbb C}\cong \oplus_{\mu\in P_+^\ag : \mu^2=-2i}
L_\mu[\omega_0-n\delta]\otimes \Hom_\ag(L_\mu,{\bold V}),
\end{equation}
\end{conjecture}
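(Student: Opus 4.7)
The plan has three stages: construct an action of $\mathfrak{a}$ on $\bigoplus_{n\ge 0} K_0(\mathrm{Rep}\,\Gamma_n)_{\C}$ categorically, identify the resulting representation with $\mathbf{V}$, and match the grading $\mathrm{gr}_\bullet$ induced by the filtration with the quadratic grading $i = -\mu^2/2$ on the right-hand side of (\ref{iso}). By Theorem \ref{moreq}, one may freely replace $H_{c,k}(\Gamma_n)$ with the Gan--Ginzburg algebra $A_{n,\lambda,k}(Q_\Gamma)$ whenever the quiver picture facilitates functorial constructions, since $F_\bullet$ is Morita-invariant.

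For the first stage, the Heisenberg generators $a_{ml}$ attached to a singular hyperplane $E_{m,N}$ should arise from parabolic induction and restriction functors of Bezrukavnikov--Etingof type between categories of $H_{c,k}(\Gamma_n)$- and $H_{c,k}(\Gamma_{n+ml})$-modules, becoming nontrivial precisely when $k=-N/m$ lies on $E_{m,N}$. The real-root operators $e_{\pm(\alpha+m\delta)}$ for $H_{\alpha,m,N}$ should be realized by induction with the aspherical modules $\pi_{a,b}\otimes Y^{\otimes q}$ constructed in the proof of Theorem \ref{conta}: the resulting functor raises $n$ by $q=ab$, and its right adjoint lowers it. The delicate task is checking the defining relations of $\mathfrak{a}$ at the level of $K_0$; in particular the bracket $[e_\beta, e_{-\beta}]$ must recover the coroot acting via the natural grading by irreducible $\Gamma_n$-characters. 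For cyclic $\Gamma$, this reduces via Proposition \ref{equa} and BGG reciprocity to an explicit Ext computation in category $\mathcal O$, consistent with Shan's construction.

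Once the $\mathfrak{a}$-action is in place, the second stage follows from the Frenkel--Kac realization: the class $[\one]\in K_0(\mathrm{Rep}\,\Gamma_0)\cong \mathbb Z$ of the trivial module serves as a highest weight vector of weight $\omega_0$, it generates $\bigoplus_n K_0(\mathrm{Rep}\,\Gamma_n)_\C$ under the proposed operators, and comparing dimensions of $(\omega_0 - n\delta)$-weight spaces (equal to $|\mathrm{Irr}(\Gamma_n)|$) with the Hilbert series of $\mathbf{V}$ yields the identification as $\mathfrak{a}$-modules. For the third stage, one must show that for $F_\bullet$, the annihilator variety of a module whose class lies in the $L_\mu$-isotypic component has dimension $-\mu^2$; this should follow from the Bezrukavnikov--Okounkov picture, in which $\mathbf{V}$ is identified with the equivariant $K$-theory of $\Gamma_n$-Hilbert schemes of Kleinian resolutions, and annihilator varieties correspond to torus fixed loci whose dimensions are controlled by the quadratic form on the weight lattice. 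For $\mathbf F_\bullet$ in the cyclic case, the analogous matching uses Shan's categorical affine Kac--Moody action and the standard support filtration on category $\mathcal O$.

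The principal obstacle is the construction of a genuine $\mathfrak{a}$-action in the first stage: verifying the Serre and Heisenberg relations on $K_0$ between operators coming from functors attached to distinct hyperplanes is currently available only piecewise (Shan--Vasserot for type $A$ at level one; Gordon--Losev for certain cyclic parameters), and extending it to general $\Gamma$ together with higher-level root operators $e_{\alpha+m\delta}$ for $m\ne 0$ appears to require substantial new input --- likely a $K$-theoretic refinement of the Etingof--Montarani construction, combined with careful tracking of the grading induced by $D \in \widetilde{\g\oplus\C}$ and compatibility with the quadratic filtration index.
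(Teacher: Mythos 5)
This is Conjecture \ref{main1} of the paper, and the paper does not prove it: it is stated as an open conjecture, supported by examples ($n=1$, $k=0$, formal $k$ on hyperplanes for orthogonal roots via Montarani and Gan, the $\Gamma=1$ case via Wilcox) and by consistency with the Bezrukavnikov--Okounkov quantum-connection picture and with the categorical Kac--Moody actions of Gordon--Martino and Shan. So there is no ``paper proof'' to compare against, and your proposal cannot be assessed as a proof; it should be assessed as a research program.

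As such, your three-stage outline is essentially the program the paper implicitly suggests and that the subsequent literature (Shan, \cite{Sh}; Shan--Vasserot, \cite{SV}; Gordon--Losev, \cite{GL}) partially realizes: build a categorical $\ag$-action via parabolic induction/restriction functors, identify the $K_0$-module with $\bold V$ via the Frenkel--Kac realization, and match the support/annihilator filtration with the quadratic grading $i=-\mu^2/2$. You correctly flag the genuine obstruction --- constructing root operators $e_{\pm(\alpha+m\delta)}$ for $m\ne 0$ and verifying Serre and Heisenberg relations among functors attached to distinct singular hyperplanes --- and the paper offers no resolution of it either. Two inaccuracies worth noting: first, the Bezrukavnikov--Okounkov input the paper invokes is the \emph{quantum connection on equivariant quantum cohomology} of Hilbert schemes of resolutions of $\Bbb C^2/\Gamma$, not equivariant $K$-theory of $\Gamma_n$-Hilbert schemes, and in that picture the matching of $\mathrm{gr}_i$ with $-\mu^2/2$ is expected to come from eigenvalues of the Casimir $C_{\alpha+m\delta}$ appearing as the residue of the connection on a singular hyperplane, not from dimensions of torus fixed loci. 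Second, Shan--Vasserot treat cyclic $\Gamma$ (cyclotomic rational Cherednik algebras) and the filtration $\bold F_\bullet$, where the level of the categorical $\widehat{\mathfrak{sl}}_m$-action is $\ell=|\Gamma|$; describing this as ``type $A$ at level one'' is not accurate. Neither slip affects the overall shape of your outline, but they would matter if one tried to carry out stage three in earnest.

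Bottom line: you have sketched a reasonable attack plan that is consistent with what the paper and subsequent work suggest, and you have honestly identified where it stops being a proof. That is the appropriate response to a statement which, in the paper itself, is an unproved conjecture.
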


\begin{remark} \label{vani}
Note that if $\Hom_\ag(L_\mu,{\bold V})\ne 0$ then 
$\mu=\omega_0-j\delta+\beta$ where $\beta\in Q_\g$
and $\beta^2/2\le j$, so $\mu^2=\beta^2-2j$ is a nonpositive even integer.
Thus, the right hand side of (\ref{iso}) vanishes if $i<0$.
Also, if $L_\mu[\omega_0-n\delta]\ne 0$ then 
$\mu=\omega_0-n\delta+\sum_{i=0}^r p_i\alpha_i$, $p_i\ge 0$, so 
$\mu^2\ge -2n$. Thus, the right hand side of (\ref{iso}) 
vanishes if $i>n$. Therefore, Conjecture \ref{main1} is valid 
upon taking the direct sum over $i$. 
\end{remark}

In fact, the analysis of Remark \ref{vani} 
shows that in the extremal case $i=0$ (i.e., $\mu^2=0$), 
if $\phi: L_\mu\to {\bold V}$ is an $\ag$-homomorphism
and $v_\mu$ a highest weight vector of $L_\mu$, then 
$\phi(v_\mu)$ is necessarily an extremal 
vector of ${\bold V}$. The space of such vectors for each weight $\mu$ is
1-dimensional. Moreover, any such nonzero vector gives rise to a
nonzero homomorphism $\phi$. Indeed, for any simple root $\gamma$ of $\ag$ we have 
$(\mu+\gamma)^2=\mu^2+2(\mu,\gamma)+\gamma^2=2(\mu,\gamma)+2>0$, 
which implies that $\mu+\gamma$ is not a weight of $\bold V$, and hence
$\phi(v_\mu)$ is a highest weight vector for $\ag$. 

Thus, setting $i=0$, we 
obtain from Conjecture \ref{main1} the following conjecture on the 
number of finite dimensional representations. 

\begin{conjecture}\label{main2}
The number of isomorphism classes of 
finite dimensional irreducible representations of 
$H_{c,k}(\Gamma_n)$ is equal to 
$$
\sum_{\mu\in P_+^\ag: \mu^2=0}\dim L_\mu[\omega_0-n\delta]. 
$$
\end{conjecture}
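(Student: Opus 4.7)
The plan is to derive Conjecture \ref{main2} as the extremal $i=0$ case of Conjecture \ref{main1}, modulo Conjecture \ref{pd1}. The reduction has two components: identifying the left-hand side of \ref{main2} with $\dim_{\Bbb C} \mathrm{gr}_0 K_0(\mathrm{Rep}\,\Gamma_n)_{\Bbb C}$, and collapsing the $i=0$ summand on the right-hand side of \ref{main1} to the extremal-weight formula in \ref{main2}.

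For the identification on the left, under the isomorphism $\psi$, the subspace $F_0 K_0(H_{c,k}(\Gamma_n))$ is spanned by classes of modules with zero-dimensional annihilator variety, i.e.\ finite-dimensional modules. The natural map $\zeta\colon K_0(H\text{-mod}_f) \to F_0 K_0$ is therefore surjective; invoking Conjecture \ref{pd1} (established in the cyclic case via Proposition \ref{equa}) makes it an isomorphism, so the rank of $K_0(H\text{-mod}_f)$ equals $\dim_{\Bbb C} \mathrm{gr}_0 K_0(\mathrm{Rep}\,\Gamma_n)_{\Bbb C}$. For the collapse on the right, I would extract the paragraph preceding \ref{main2} into a lemma: if $\mu \in P_+^\ag$ satisfies $\mu^2 = 0$ and occurs as a weight of $\bold V$, then $\mu$ is extremal, the weight space $\bold V[\mu]$ is one-dimensional, and the unique (up to scalar) vector in it generates a cyclic $\ag$-submodule that is forced to equal $L_\mu$ by integrability together with the calculation $(\mu+\gamma)^2 > 0$ for each simple root $\gamma$ of $\ag$ (which rules out $\mu+\gamma$ as a weight of $\bold V$, hence makes the vector a highest-weight vector). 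Thus $\dim \Hom_\ag(L_\mu,\bold V)$ is $1$ exactly when $\mu$ is a weight of $\bold V$, and the $i=0$ term in \ref{main1} becomes the sum in \ref{main2} once one checks that $L_\mu[\omega_0 - n\delta] \neq 0$ with $\mu^2 = 0$ already forces $\mu$ to be a weight of $\bold V$ (so the two indexing sets coincide).

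The main obstacle is, of course, Conjecture \ref{main1} itself, on which this reduction depends. A proof of \ref{main1} will almost certainly require constructing a representation-theoretic action of $\widetilde{\g \oplus \Bbb C}$ on $\bigoplus_{n \geq 0} \mathrm{gr}\,K_0(H_{c,k}(\Gamma_n))_{\Bbb C}$ — naturally from Heisenberg and Kac-Moody operators built out of parabolic induction and restriction across the tower $\{H_{c,k}(\Gamma_n)\}_{n \geq 0}$ in the spirit of Shan and Gordon-Martino and the Bezrukavnikov-Okounkov quantum connection — and then matching the annihilator-variety filtration with the $\ag$-isotypic filtration of $\bold V$. This last step is the heart of the difficulty: it requires converting a geometric invariant (dimension of the characteristic variety, defined via the PBW filtration) into representation-theoretic weight data for a Levi subalgebra that varies discontinuously in $(\lambda,k)$. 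Pending a full proof, Conjecture \ref{main2} admits independent verification in the cases already indicated in the paper — $n=1$ via \cite{CBH} and cyclic $\Gamma$ at arbitrary $n$ — and in special parameter regimes (for instance $k$ irrational and $\lambda$ very generic, where $\ag = \widetilde{\h}$) in which both sides reduce to known combinatorial formulas.
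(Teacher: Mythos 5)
Your reduction of Conjecture \ref{main2} to the $i=0$ slice of Conjecture \ref{main1}, via the extremal-vector computation $(\mu+\gamma)^2 = 2(\mu,\gamma)+2 > 0$ showing $\dim\Hom_\ag(L_\mu,\bold V)=1$ whenever $\mu^2=0$, is exactly the derivation the paper gives in the paragraph surrounding the statement. You are in fact slightly more careful than the text: the paper passes silently over the fact that identifying $\dim\mathrm{gr}_0 K_0$ with the number of finite-dimensional irreducibles already requires injectivity of $\zeta$, i.e.\ Conjecture \ref{pd1}, and your explicit invocation of it (and your check that $\mu^2=0$ together with $L_\mu[\omega_0-n\delta]\neq 0$ forces $\mu$ to be an extremal weight of $\bold V$, so the indexing sets agree) fills in details the paper leaves implicit.
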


Indeed, the above discussion implies that for $\mu=\omega_0-\frac{\beta^2}{2}\delta+\beta$ such that $\mu\in P_+^\ag$, 
one has $\dim \Hom_\ag(L_\mu,{\bold V})=1$. 

\begin{example}
Assume that $\ag'=({\mathfrak{sl}}_2)^\ell$. In this case, 
the point $(\lambda,k)$ lies on the intersection of the hyperplanes 
$(\lambda,\alpha^i)+km_i+N_i=0$, 
$i=1,...,\ell$, where $\alpha^i$ are pairwise orthogonal roots of $\g$, and 
$m_i$ are integers. 
Let $\widetilde{\alpha}^i=\alpha^i+m_i\delta$.
We will pick the signs in the hyperplane equations so that 
$\widetilde\alpha_i$ are positive roots; in particular, $m_i\ge 0$.    
Then, $\mu-\omega_0+n\delta$ should be a nonnegative 
integer linear combination of $\widetilde{\alpha}^i$:
$$
\mu-\omega_0+n\delta=\sum_{i=1}^\ell a_i\widetilde{\alpha}^i, a_i\in \Bbb Z_+.
$$ 
The condition $\mu^2=0$ reads 
\begin{equation}\label{diof}
\sum_i a_i(a_i+m_i)=n.
\end{equation}
Thus, Conjecture \ref{main2} predicts that the number of finite dimensional representations in 
this case equals the number of integer solutions $(a_1...,a_\ell)$ of equation (\ref{diof}) 
such that $a_i\ge 0$. 
 
If $k$ is a formal variable and $(\lambda,k)$ (where $\lambda=\lambda(k)$) is on the above hyperplanes
(but not on any other singular hyperplanes), then
this statement is in fact true, and can be deduced from the 
papers \cite{M} and \cite{G}.
Let us sketch a proof. 

The paper \cite{M} (and by another method, \cite{G}) 
constructs an irreducible representation 
of $H_{c,k}(\Gamma_n)$ for each solution of equation (\ref{diof})
as above. Indeed, fix such a solution $a=(a_1,...,a_\ell)$.  
Let $\pi_i$ be the representation of the symmetric group $S_{a_i(a_i+m_i)}$ 
whose Young diagram is a rectangle with $a_i$ columns and $a_i+m_i$ rows
if $\alpha^i+N_i\delta$ is a positive root, and with $a_i+m_i$ columns and $a_i$ rows otherwise. 
Let $Y_i$ be the representations of $\Gamma$ corresponding to dimension vectors 
$\pm(\alpha^i+N_i\delta)$ (whichever is a positive root), and let 
$$
U_a={\rm Ind}_{\prod_i S_{a_i(a_i+m_i)}}^{S_n}\bigotimes_i (\pi_i\otimes Y_i^{\otimes a_i(a_i+m_i)}).
$$
Then by \cite{M},\cite{G}, $U_a$ extends to an $H_{c,k}(\Gamma_n)$-module, and $U_a$ is not isomorphic to 
$U_{a'}$ (even as a $\Gamma_n$-module) if $a\ne a'$.  
Moreover, we claim that any finite dimensional irreducible $H_{c(k),k}(\Gamma_n)$-module is of this form. 
Indeed, suppose $U$ is a finite dimensional irreducible $H_{c(k),k}(\Gamma_n)$-module (defined over $\Bbb C[[k]]$), and 
$\bar U=U/kU$ be the corresponding module over $H_{c(0),0}(\Gamma_n)=\Bbb CS_n\ltimes H_{c(0)}(\Gamma)^{\otimes n}$. 
Since the roots $\alpha^i$ are orthogonal, the category of finite dimensional modules over the algebra 
$\Bbb CS_n\ltimes H_{c(0)}(\Gamma)^{\otimes n}$ is semisimple, so we have $\bar U=\oplus_{j=1}^J \bar U_j$, where $\bar U_j$ are simple modules. 
By \cite{M}, this means that each of the representations $\bar U_j$ should have a deformation 
$U_j$, and $U=\oplus_{j=1}^J U_j$ (as ${\rm Ext}^1(\bar U_i,\bar U_j)=0$), so $J=1$ and by \cite{M} $U=U_a$ 
for some $a$. 

Note that if the roots $\alpha^i$ are not orthogonal (i.e., $\ag'$ contains components 
of rank $>1$) then this proof fails (and the conjecture, in general, predicts more representations than 
constructed in \cite{M},\cite{G}). Presumably, the missing representations are 
constructed by $k$-deforming reducible but not decomposable representations of $H_{c,0}(\Gamma_n)$.    
\end{example}

\begin{example}
Suppose $n=1$. Let $\ag''=\ag'\cap \g$. 
Since the algebra $H_{c,k}(\Gamma_1)=H_c(\Gamma)$ does not depend on $k$,
it follows from the above that $\dim F_0=\dim {\bold F}_0$ is the semisimple 
rank of $\ag''$. Let us show that this agrees with the prediction of 
Conjecture \ref{main2} (so Conjectures \ref{main1} and \ref{main2} are true in this case). 
Indeed, if $L_\mu[\omega_0-\delta]\ne 0$, we have $\mu=\omega_0-\delta+\beta$, where $\beta$ is a root of 
$\ag''$. Also, for any positive root $\gamma$ of $\ag''$ we must have 
$(\mu,\gamma)=(\beta,\gamma)\ge 0$, which implies that $\beta$ is the maximal root of $\ag''$. 
So $\dim L_\mu[\omega_0-\delta]$ is the semisimple rank of $\ag''$, as desired. 
\end{example}

\subsection{The case of rational $k$}

\subsubsection{Integer $k$}
In the case when $k$ is rational, the situation becomes more complicated. 
First consider the situation when $k$ is an integer. 
In this case, the situation is similar to the case 
of irrational $k$, except that the Kac-Moody Lie algebra $\ag'$ is 
affine, rather than finite dimensional. More precisely, 
$\ag'=\widehat{\g'}$, where $\g'\subset \g$ is the semisimple subalgebra generated 
by the root elements corresponding to roots $\alpha$ with $(\lambda,\alpha)\in \Bbb Z$. 
Thus, Conjecture \ref{main1} predicts that 
$$
{\rm gr}_iK_0({\rm Rep} \Gamma_n)_{\Bbb C}\cong {\mathcal F}^{\otimes s}[i-n]\otimes {\mathcal F}^{\otimes r+1-s}[-i],
$$
where $s$ is the rank of $\g'$, and the numbers in the square brackets mean the degrees. 

\begin{theorem}\label{main3}
Conjectures \ref{main1} and \ref{main2} hold for $k=0$. 
\end{theorem}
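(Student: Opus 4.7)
The plan is to use the decomposition $H_{c,0}(\Gamma_n)=\mathbb{C}S_n\ltimes H_c(\Gamma)^{\otimes n}$ from Remark 2.2(4) to reduce the theorem to the $n=1$ case, which has already been handled in the examples (yielding $\dim F_0^{c,0}=\dim\mathbf{F}_0^{c,0}=s$, the semisimple rank of $\mathfrak{g}'$, and $\dim K_0/F_0=r+1-s$). First I would parametrize simples of $H_{c,0}(\Gamma_n)$ via Clifford--Mackey theory for wreath products: each is of the form $\mathrm{Ind}_{\prod_j(S_{n_j}\ltimes H_c(\Gamma)^{\otimes n_j})}^{H_{c,0}(\Gamma_n)}\bigotimes_j(\pi_j\otimes V_j^{\otimes n_j})$ with the $V_j$ distinct simples of $H_c(\Gamma)$, $\pi_j\in\mathrm{Irr}(S_{n_j})$, $\sum_j n_j=n$. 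Using additivity of $\dim AV$ under tensor products of $H_c(\Gamma)$-modules and its stability under the parabolic induction (which merely passes to an $S_n$-orbit), such a simple satisfies $\dim AV=2\sum_{j\colon V_j\text{ infinite-dim.}}n_j$. Hence $F_i^{c,0}K_0(H_{c,0}(\Gamma_n))_\mathbb{Q}$ is rationally spanned by classes of simples with at most $i$ infinite-dimensional slots.

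Second, I would combine this with the standard wreath-product Fock space isomorphism $\bigoplus_{n\ge 0}K_0(H_{c,0}(\Gamma_n))_\mathbb{Q}\cong\mathcal{F}^{\otimes(r+1)}_\mathbb{Q}$, a formal consequence of the bijection between indecomposable projectives of $\mathbb{C}S_n\ltimes A^{\otimes n}$ and $N$-tuples of partitions of total size $n$ when $\mathrm{rk}\,K_0(A)_\mathbb{Q}=N$. Choose a $\mathbb{Q}$-basis $[L_1],\dots,[L_{r+1}]$ of $K_0(H_c(\Gamma))_\mathbb{Q}$ with $[L_1],\dots,[L_s]$ the finite-dimensional simples (which span $F_0^{c,0}$ at $n=1$) and $[L_{s+1}],\dots,[L_{r+1}]$ a lift of a basis of the quotient $K_0/F_0$ by classes of chosen infinite-dimensional simples. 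In the resulting multi-partition basis of $K_0(H_{c,0}(\Gamma_n))_\mathbb{Q}$, the first-step description of $F_i^{c,0}$ becomes the span of tuples whose total partition size over the last $r+1-s$ coordinates is at most $i$. This yields
$$\dim\mathrm{gr}_i^{c,0}K_0(H_{c,0}(\Gamma_n))_\mathbb{Q}=p_s(n-i)\,p_{r+1-s}(i).$$

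Finally, I would verify the same count on the Lie-algebraic side of (\ref{iso}). For $k=0$, $\mathfrak{a}$ contains no Heisenberg generators and $\mathfrak{a}'=\widehat{\mathfrak{g}'}$; via Frenkel--Kac, the restriction of $\mathbf{V}=\mathbf{V}_0\otimes\mathcal{F}$ to $\widehat{\mathfrak{g}'}$ decomposes as a direct sum of integrable level-$1$ modules indexed by $Q_\mathfrak{g}/Q_{\mathfrak{g}'}$, each with multiplicity space a bosonic Fock space of total rank $r+1-s$. Tracking the $\delta$-grading at weight $\omega_0-n\delta$ reproduces the factor $\mathcal{F}^{\otimes s}[n-i]\otimes\mathcal{F}^{\otimes(r+1-s)}[i]$ of dimension $p_s(n-i)\,p_{r+1-s}(i)$, matching Step~2; Conjecture \ref{main2} is the $i=0$ specialization. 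The filtration $\mathbf{F}_\bullet^{c,0}$ for cyclic $\Gamma$ is handled identically using the category-$\mathcal{O}$ support filtration, which is likewise additive under the wreath structure. The principal obstacle is Step~2: although $H_c(\Gamma)$ has infinitely many infinite-dimensional simples, one must carefully verify---using K\"unneth multiplicativity of $K_0$ under tensor products and additivity of $\dim AV$ on composition factors---that expanding an arbitrary simple's class in the chosen basis and propagating through the wreath-product induction does not take the result out of its expected filtration piece.
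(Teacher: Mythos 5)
Your approach is essentially the paper's: exploit the decomposition $H_{c,0}(\Gamma_n)=\mathbb{C}S_n\ltimes H_c(\Gamma)^{\otimes n}$ to reduce to the $n=1$ case and push the filtration through wreath-product Clifford theory. The paper's proof is a single sentence asserting this reduction, and your proposal supplies a correct, detailed execution of it (including a sound plan for the subtlety you flag, that the multipartition basis coming from a choice of simples adapted to the $n=1$ filtration respects $F_\bullet$).
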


\begin{proof}
This follows from the fact that $H_{c,0}(\Gamma_n)=\Bbb CS_n\ltimes H_c(\Gamma)^{\otimes n}$, so 
the understanding the filtrations $F_\bullet$ and ${\bold F}_\bullet$ reduces to the case $n=1$. 
\end{proof}

We expect that the situation is the same for any $k\in \Bbb Z$, because of 
the existence of translation functor $k\to k+1$ defined in \cite{EGGO}.
This functor is an equivalence of categories outside of aspherical hyperplanes,
and expected to always be a derived equivalence. 

\subsubsection{Non-integer $k\in \Bbb Q$}

Now consider the case of rational non-integer $k$, with denominator $m>1$. 
Then we have $\ag=\widetilde{\h}+\ag'+{\mathcal H}_m$, where 
${\mathcal H}_m$ is the Heisenberg algebra generated by $a_{mi}$.  
In this case, the one-index filtrations and gradings we have considered above can 
actually be refined to two-index ones. More specifically, 
it is easy to see that the possible annihilator varieties 
are the varieties $Y_{p,j}\subset V_n$ of points 
with some $p$ ($V$-valued) coordinates equal to zero and 
$j$ collections of $m$ other coordinates equal to each other
modulo $\Gamma$ (where $p,j\in \Bbb Z_+$ and $p+jm\le n$).\footnote{For example, 
for $m=2$ and $n=3$, $Y_{1,1}\subset V_3$ is the set of points 
$(v,v,0)$, $(v,0,v)$, and $(0,v,v)$.} Clearly, 
$Y_{p,j}$ contains $Y_{p+1,j}$, $Y_{p,j+1}$ and $Y_{p+m,j-1}$
and moreover $Y_{p,j}^\circ:=Y_{p,j}\setminus (Y_{p+1,j}\cup
Y_{p,j+1}\cup Y_{p+m,j-1})$ are locally closed smooth subvarieties 
which define a finite stratification of $V$. 
So we can define $F_{i,j}K_0({\rm Rep} \Gamma_n)$ to be spanned 
by the classes of the modules $M$ with $AV(M)\subset Y_{n-i-jm,j}$, 
and set 
$$
{\rm gr}_{i,j}^FK_0=F_{i,j}K_0/(F_{i-m,j+1}K_0+F_{i-1,j}K_0+F_{i,j-1}K_0).
$$
A similar definition is made for the filtration ${\bold F}$. 
Namely, we define $X_{p,j}\subset \Bbb C^n$ 
to be the varieties of points 
with some $p$ coordinates equal to zero and 
$j$ collections of $m$ of coordinates equal to each other modulo $\Gamma$, and define
$F_{i,j}K_0({\rm Rep} \Gamma_n)$ to be spanned by classes of modules 
with support (as a $\Bbb C[x_1,...,x_n]$-module) contained in $X_{n-i-jm,j}$. 
Then we set 
$$
{\rm gr}_{i,j}^{\bold F}K_0={\bold F}_{i,j}K_0/({\bold F}_{i-m,j+1}K_0+{\bold F}_{i-1,j}K_0+{\bold F}_{i,j-1}K_0).
$$
Note that for either of the filtrations, 
$$
{\rm gr}_sK_0=\oplus_{i,j\ge 0: i+j=s}{\rm gr}_{i,j}K_0
$$

Now we will formulate a conjectural description of the space ${\rm gr}_{i,j}K_0$. 
To this end, define the element 
$$
\partial_m=\frac{1}{m}\sum_{l=1}^\infty a_{-ml}a_{ml}.
$$
Obviously, $\partial_m$ has zero weight, and its eigenvalues are in $\Bbb Z_+$ on every module $L_\mu$. 
We denote by $L_\mu[\lambda,j]$ the subspace of $\bold V$ of weight $\lambda$ and 
eigenvalue $j$ of $\partial_m$. 

\begin{conjecture}\label{main4} For either of the filtrations $F_\bullet$
and ${\bold F}_\bullet$, there exists an isomorphism of vector spaces 
\begin{equation}\label{iso1}
{\rm gr}_{i,j} K_0({\rm Rep}\Gamma_n)_{\Bbb C}\cong \oplus_{\mu\in P_+^\ag : \mu^2=-2i}
L_\mu[\omega_0-n\delta,j]\otimes \Hom_\ag(L_\mu,{\bold V}).
\end{equation}
\end{conjecture}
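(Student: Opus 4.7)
The plan is to proceed in the spirit of the categorification program for Cherednik algebras developed by Shan and Shan--Vasserot, refining Conjecture \ref{main1} to incorporate the extra Heisenberg structure that appears when $k$ has denominator $m>1$. The starting point is to construct a representation of $\widetilde{\g\oplus\CC}$ on $\bigoplus_{n\ge 0} K_0(\mathcal{O}_{c,k}(\Gamma_n))_{\CC}$ (working with ${\bf F}_\bullet$ first; the $F_\bullet$ case should be handled in parallel using annihilator varieties in place of supports) which, via the Frenkel--Kac isomorphism, realizes this graded vector space as $\bold V$. The root vectors of the affine Kac--Moody algebra $\ag'=\widehat{\g'}$ are expected to come from the parabolic induction/restriction functors of Bezrukavnikov--Etingof, as used by Shan to categorify $\widehat{\mathfrak{sl}}$-crystals, while the Heisenberg generators $a_{\pm m\ell}$, $\ell\in\ZZ\setminus 0$, should come from functors between $\mathcal{O}_{c,k}(\Gamma_n)$ and $\mathcal{O}_{c,k}(\Gamma_{n\mp m\ell})$ associated to the parabolic subgroup $\Gamma_{n-m\ell}\times \Gamma_m^\ell\subset \Gamma_n$, adding or removing $\ell$ rim hooks of length $m$.

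The second step is a geometric matching: the stratification of $\CC^n$ by $X_{p,j}^\circ$ (respectively the stratification of $V_n$ by $Y_{p,j}^\circ$) is precisely the stratification of support by ``type'', where $j$ records the number of disjoint $m$-tuples of coinciding coordinates modulo $\Gamma$. Under the Lie-theoretic identification, this should match the filtration given by the action of the negative part $\ag_-$ of $\ag$, further refined by the $\partial_m$-eigenvalue: modules with support contained in $X_{p,j}$ (with $p=n-i-jm$) should correspond exactly to vectors in $\bold V$ of $\ag_-$-depth at least $i$ and $\partial_m$-eigenvalue $j$. To verify this locally, one would apply parabolic restriction at a generic point of $X_{p,j}^\circ$, reducing the question to modules over $H_{c,0}(\Gamma_p)\otimes H_{c,k}(\Gamma_m)^{\otimes j}$; the $H_{c,0}(\Gamma_p)$ factor is handled by Theorem \ref{main3}, while the required finite-dimensional representations of each $H_{c,k}(\Gamma_m)$ factor are the rectangular-partition representations of \cite{M},\cite{G}, which account precisely for each rim hook.

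Granted these two ingredients, the identification of ${\rm gr}_{i,j}K_0$ with $\bigoplus_\mu L_\mu[\omega_0-n\delta,j]\otimes\Hom_\ag(L_\mu,\bold V)$ reduces to matching highest weight vectors: the extremal vectors in $\bold V$ correspond to classes of finite-dimensional modules (as in the $i=0$, $j=0$ case of Conjecture \ref{main2}), and the $\ag$-descent then generates the rest of the graded pieces. The grading by $(i,j)$ is reconciled with that by $(\mu^2,j)$ using $\mu^2=\beta^2-2J$ for weights $\mu=\omega_0-J\delta+\beta$ of $\bold V$, combined with the fact that $\partial_m$ reads off the Heisenberg level in the Frenkel--Kac realization.

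The main obstacle will be proving that the geometric support filtration on $K_0$ matches the Lie-theoretic filtration by weight and Heisenberg level --- equivalently, that the categorified Heisenberg generators built from parabolic induction and restriction satisfy the commutation relations $[a_i,a_j]=i\delta_{i,-j}K$ at the level of $K_0$, and that they shift the support filtration in the way prescribed by the stratification $X_{p,j}$. This is a Mackey-type computation comparing compositions of parabolic inductions and restrictions between different parabolic subgroups of $\Gamma_n$, in the spirit of Shan--Vasserot; the additional subtlety beyond their $\widehat{\mathfrak{sl}}_m$-case is the interaction with the McKay data of $\Gamma$. I would expect the case of cyclic $\Gamma$ (where Proposition \ref{equa} and BGG reciprocity give sharp tools in category $\mathcal{O}$, and the Gan--Ginzburg algebra is already a cyclotomic rational Cherednik algebra) to be the natural starting point, with the general case requiring extensions of the existing crystal combinatorics.
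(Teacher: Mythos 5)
The statement you are addressing is a \emph{conjecture}, and the paper contains no proof of it. What the paper provides instead is motivation and partial evidence: the result of Wilcox for $\Gamma=1$ (which verifies the prediction for the ${\bf F}$-filtration and $\Gamma$ trivial); the argument in the final subsection that the finite-dimensional count of Conjecture \ref{main5} matches the space of $\widehat{\mathfrak{gl}}_m$-singular vectors via boson--fermion correspondence and level-rank duality; and a remark that Shan--Vasserot \cite{SV} subsequently proved the ${\bf F}$-version for cyclic $\Gamma$ under technical hypotheses. Your proposal tracks precisely this roadmap --- categorify a $\widehat{\mathfrak{gl}}_m$-action on $\bigoplus_n K_0(\mathcal{O}_{c,k}(\Gamma_n))$ via parabolic induction/restriction functors, match the support stratification $X_{p,j}$ with the weight-plus-Heisenberg-level grading, and use level-rank duality to identify singular vectors --- so it is not a genuinely different route, but the route the paper itself points toward and that was partly executed in \cite{SV}. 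Given the conjectural status, this is a reasonable research plan rather than a proof.

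That said, let me flag the places where your sketch glosses over obstacles the paper itself highlights. First, the whole category-$\mathcal{O}$ machinery, the functors of \cite{BE}, and the filtration ${\bf F}_\bullet$ exist only for cyclic $\Gamma$; the conjecture for the annihilator-variety filtration $F_\bullet$ and for non-cyclic $\Gamma$ has no categorical construction currently available, and your parenthetical ``the $F_\bullet$ case should be handled in parallel using annihilator varieties'' is not a parallel that is understood --- annihilator varieties live in $V_n^*$ for the full algebra $H_{c,k}$, not in $\mathbb{C}^n$ for a $\mathbb{C}[x]$-module structure, so the restriction functors do not transport naively. Second, the step you describe as ``a Mackey-type computation'' --- promoting the $\widehat{\mathfrak{sl}}_m$-action to $\widehat{\mathfrak{gl}}_m$ by adding Heisenberg generators from rim-hook functors --- is exactly the step the paper warns is ``quite nontrivial and requires passing to derived functors''; verifying the Heisenberg relations on $K_0$ (let alone categorically) is the crux of \cite{SV}, not a routine check. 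Third, your local analysis at a generic point of $X_{p,j}^\circ$ reducing to $H_{c,0}(\Gamma_p)\otimes H_{c,k}(\Gamma_m)^{\otimes j}$ is plausible but requires a control over how parabolic restriction interacts with support that is only available after the categorical $\widehat{\mathfrak{gl}}_m$-action has been established --- so the argument is somewhat circular as sketched. If you want a feasible starting point, the cyclic $\Gamma$, ${\bf F}_\bullet$ case (essentially \cite{SV}) is where the tools exist, and the genuinely open parts of the conjecture are the $F_\bullet$ filtration, the non-cyclic groups, and the removal of the technical parameter assumptions.
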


Note that this subsumes Conjecture \ref{main1} (by putting $m=\infty$). 

For finite dimensional representations, $j$ must be zero, so 
we have 

\begin{conjecture}\label{main5}
The number of isomorphism classes 
of finite dimensional irreducible representations of 
$H_{c,k}(\Gamma_n)$ is equal to 
$$
\sum_{\mu\in P_+^\ag: \mu^2=0}\dim L_\mu[\omega_0-n\delta,0]. 
$$
\end{conjecture}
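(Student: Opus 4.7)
The plan is to deduce Conjecture \ref{main5} as the $(i,j)=(0,0)$ specialization of Conjecture \ref{main4}, combined with the identification of the $K_0$-lattice of finite dimensional modules with the bottom piece of the annihilator-variety filtration.

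\textbf{Step 1.} A simple $H_{c,k}(\Gamma_n)$-module $M$ is finite dimensional if and only if $AV(M)=\lbrace 0\rbrace$, which is the smallest stratum $Y_{n,0}$; hence $F_0 K_0=F_{0,0}K_0={\rm gr}_{0,0}K_0$ as abelian groups, and this space is spanned by the classes of finite dimensional simples. By Conjecture \ref{pd1} (which is a consequence of Conjecture \ref{pd}) the natural map $K_0(H_{c,k}(\Gamma_n)\text{-mod}_f)\to F_0 K_0$ is also injective. Therefore the number of isomorphism classes of finite dimensional simple modules equals $\dim_{\Bbb C}{\rm gr}_{0,0}K_0({\rm Rep}\Gamma_n)_{\Bbb C}$ (the analogous statement for the $\bold F$-filtration on category ${\mathcal O}$ gives the same count).

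\textbf{Step 2.} Specializing Conjecture \ref{main4} at $(i,j)=(0,0)$ gives
$$
{\rm gr}_{0,0}K_0({\rm Rep}\Gamma_n)_{\Bbb C}\cong \bigoplus_{\mu\in P_+^{\ag}:\ \mu^2=0} L_\mu[\omega_0-n\delta,0]\otimes \Hom_\ag(L_\mu,{\bold V}),
$$
so it remains to show that $\dim\Hom_\ag(L_\mu,{\bold V})=1$ whenever $L_\mu[\omega_0-n\delta,0]\ne 0$; summing over $\mu$ then produces the formula asserted in Conjecture \ref{main5}.

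\textbf{Step 3.} Any nonzero $\phi\in\Hom_\ag(L_\mu,{\bold V})$ is determined by $\phi(v_\mu)$, which must be a weight-$\mu$ vector of ${\bold V}$ annihilated by every positive root vector of $\ag$ (the $e_{\tilde\alpha}$'s from $\ag'$ and, when $\mathcal{H}_m\subset\ag$, the Heisenberg creators $a_{ml}$, $l>0$). Writing $\mu=\omega_0-n\delta+\beta$, the condition $\mu^2=0$ forces $\beta^2=2n$, so in the Frenkel--Kac model ${\bold V}={\mathcal F}^{\otimes r+1}\otimes\Bbb C[Q_\g]$ the weight space ${\bold V}[\mu]$ is one-dimensional, spanned by the extremal vector $1\otimes e^\beta$. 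This vector automatically has $\partial_m$-eigenvalue zero and is killed by all $a_{ml}$, $l>0$, and, exactly as in Remark \ref{vani}, the inequality $(\mu+\gamma)^2=2(\mu,\gamma)+2>0$ for any positive simple root $\gamma$ of $\ag'$ shows that $\mu+\gamma$ is not a weight of ${\bold V}$, so $\phi(v_\mu)$ is $\ag'$-highest as well. Hence $\dim\Hom_\ag(L_\mu,{\bold V})\le 1$, with equality precisely on the support of the sum above.

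\textbf{Main obstacle.} The reduction above is essentially formal; the real content is concentrated in Conjectures \ref{pd1} and \ref{main4}. The hard part is Conjecture \ref{main4}, which identifies the associated graded of the annihilator-variety (respectively, support) filtration on $K_0$ with the $\ag$-isotypic decomposition of the extended basic representation ${\bold V}$; this is expected to follow from the Bezrukavnikov--Okounkov description of the quantum connection on Hilbert schemes of Kleinian resolutions together with Losev-type wall-crossing control in category $\mathcal{O}$. A secondary subtlety is the multiplicity-one analysis of Step 3 in the presence of the Heisenberg factor $\mathcal{H}_m$ for rational non-integer $k$, which rests on the Frenkel--Kac realization together with the fact that Heisenberg creators do not see the extremal weight spaces.
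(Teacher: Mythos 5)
Your proposal is correct and follows essentially the same route the paper takes: the paper states Conjecture \ref{main5} as the $(i,j)=(0,0)$ specialization of Conjecture \ref{main4}, noting only that for finite dimensional modules $j$ must vanish, and it relies on the multiplicity-one argument spelled out just before Conjecture \ref{main2} (extremal vectors in the Frenkel--Kac model give $\dim\Hom_\ag(L_\mu,\bold V)=1$ when $\mu^2=0$). You make two things explicit that the paper leaves implicit: the appeal to Conjecture \ref{pd1} to equate $\dim F_0K_0$ with the number of finite dimensional simples, and the verification that the extremal vector $1\otimes e^\beta$ is killed by the Heisenberg part $\mathcal{H}_m$ and has $\partial_m$-eigenvalue $0$, so the multiplicity-one argument persists in the rational-$k$ case. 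These are correct and faithful elaborations rather than a different proof.
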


\begin{example}
Let $\Gamma=1$. In this case, $\ag={\mathcal H}_m$, 
and the Fock space ${\bold V}={\mathcal F}$ factors as 
${\mathcal F}={\mathcal F}_m\otimes {\mathcal F}_m'$, where 
${\mathcal F}_m$ is the Fock space for ${\mathcal H}_m$, 
and ${\mathcal F}_m'$ is the Fock space of the Heisenberg algebra ${\mathcal H}_m'$ 
generated by $a_s$ with $s$ not divisible by $m$. 
Also we have $\mu=\omega_0-i\delta$ for some $i\in \Bbb Z_+$. Thus Conjecture \ref{main4} predicts that 
$$
{\rm gr}_{i,j}K_0({\rm Rep}\Gamma_n)_{\Bbb C}=L_{\omega_0-i\delta}[\omega_0-n\delta,j]\otimes 
\Hom_{{\mathcal H}_m}(L_{\omega_0-i\delta},{\mathcal F}).
$$
For the first factor to be nonzero, we need $i=n-jm$, so the prediction
is that ${\rm gr}_{i,j}$ vanishes unless $i=n-jm$, in which case it simplifies to 
$$
{\rm gr}_{n-jm,j}K_0({\rm Rep}\Gamma_n)_{\Bbb C}=
{\rm gr}_{n-j(m-1)}K_0({\rm Rep}\Gamma_n)_{\Bbb C}=
{\mathcal F}_m[-jm]\otimes {\mathcal F}_m'[jm-n].
$$
In particular, there are no finite dimensional representations, since one never has $i,j=0$ (as $i+jm=n$). 

For filtration ${\bold F}$, 
this prediction is in fact a theorem.
Namely, first of all, it is clear 
from \cite{BE} that $p=n-i-jm$ must be zero, since 
the Weyl algebra has 
no finite dimensional representations.
Also, one has the following much stronger 
result, proved recently by S. Wilcox. 

Assume that $k<0$ (the case $k>0$ is similar). 
Let ${\mathcal C}_j$ be the category of modules 
over the rational Cherednik algebra $H_{1,k}(S_n)$
from category ${\mathcal O}$ which are supported on
$X_j:=X_{0,j}$ modulo those supported on $X_{j+1}$ (for $0\le j\le n/m$). 
Let ${\bold H}_q(N)$ be the finite dimensional Hecke algebra 
corresponding to the symmetric group $S_N$ and parameter $q$. 

\begin{theorem} (S. Wilcox, \cite{W})
The category ${\mathcal C}_j$ 
is equivalent to \linebreak
${\rm Rep}S_j\boxtimes {\rm Rep}{\bold H}_q(n-jm)$,
where $q=e^{2\pi ik}$.  

In more detail, the irreducible module $L_k(\lambda)$ 
whose lowest weight is the partition $\lambda$ of $n$ 
is supported on $X_j$ but not $X_{j+1}$ 
if and only if $\lambda=m\mu+\nu$, where 
$\mu$ is a partition of $j$ and $\nu$ 
is an $m$-regular partition (i.e. each part 
occurs $<m$ times), and in 
${\rm Rep}S_j\boxtimes {\rm Rep}{\bold H}_q(n-jm)$ 
it is expected to correspond to $\pi_\mu\boxtimes D_\nu$, where 
$\pi_\mu$ is the irreducible representation 
of $S_j$ corresponding to $\mu$, and $D_\nu$ is the irreducible 
Dipper-James module corresponding to $\nu$, \cite{DJ}. 
\end{theorem}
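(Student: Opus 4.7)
The plan is to combine the Bezrukavnikov--Etingof (BE) parabolic restriction functor with the KZ functor of \cite{GGOR}. Writing $k=s/m$ in lowest terms, so that $q=e^{2\pi ik}$ is a primitive $m$-th root of unity, I fix a generic point
$$
p=(v_1,\dots,v_1,v_2,\dots,v_2,\dots,v_j,\dots,v_j,w_1,\dots,w_{n-jm})\in X_j\setminus X_{j+1}
$$
(with $j$ clusters of $m$ equal coordinates, and all $v_a$, $w_b$ pairwise distinct). Its pointwise stabilizer in $S_n$ is the parabolic $W_p=(S_m)^j$, and the normalizer quotient is $S_j\times S_{n-jm}$.

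The first step is to apply BE restriction $\mathrm{Res}_p:\mathcal{O}(H_{1,k}(S_n))\to\mathcal{O}(H_{1,k}(S_m))^{\boxtimes j}$ and use the classical fact that, since $k$ has denominator exactly $m$, the algebra $H_{1,k}(S_m)$ admits a unique irreducible module $U_m\in\mathcal{O}$ whose $\mathbb{C}[x_1,\dots,x_m]$-support is the origin. Together with Losev's support theorem from \cite{Lo}, this should imply that an object $M$ maps to a nonzero class in $\mathcal{C}_j$ if and only if $\mathrm{Res}_p(M)$ is a nonzero multiple of $U_m^{\boxtimes j}$.

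The second step is to recover $M$ from the $\mathrm{Res}_p(M)$-data plus the equivariance under the normalizer quotient $S_j\times S_{n-jm}$. The $S_j$-action permutes the $j$ identical copies of $U_m$ and produces a $\mathrm{Rep}\,S_j$-factor labeled by partitions $\mu\vdash j$. The $S_{n-jm}$-action governs the ``transverse'' direction and should be promoted, via a relative version of BE restriction along the stratum, to the action of the Cherednik algebra $H_{1,k}(S_{n-jm})$ on an auxiliary category $\mathcal{O}$; objects of $\mathcal{C}_j$ correspond to those whose support meets the regular locus of $\mathbb{C}^{n-jm}$. The KZ functor of \cite{GGOR} restricts to an equivalence between this subcategory and $\mathrm{Rep}\,\mathbf{H}_q(n-jm)$, assembling to the claimed tensor decomposition.

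The identification of simples then follows by tracking standard modules: $\Delta_k(m\mu+\nu)$ restricts to $U_m^{\boxtimes|\mu|}\otimes\Delta_k(\nu)$ twisted by $\pi_\mu$, and under KZ the simple quotient $L_k(\nu)$ maps to the Dipper--James module $D_\nu$ when $\nu$ is $m$-regular (otherwise it is killed by KZ, so the corresponding $L_k(m\mu+\nu)$ sits in deeper support). The main obstacle will be the construction of the transverse $S_{n-jm}$-category: one has to promote the pointwise BE functor to a relative version that cleanly decouples the ``cluster'' and ``singleton'' degrees of freedom, which is the technical heart of the argument and relies on Shan's categorified Heisenberg action together with careful support analysis. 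Verifying that the normalizer $S_j$-permutation lifts canonically (rather than merely projectively) to an action on objects of $\mathcal{C}_j$ is a secondary but nontrivial point.
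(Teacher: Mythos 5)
The paper does not prove this theorem; it is stated as a citation to Wilcox \cite{W}, so there is no ``paper's proof'' to compare against. Judged on its own, your sketch captures the correct high-level picture (BE restriction to the parabolic $(S_m)^j$, the normalizer quotient $S_j\times S_{n-jm}$, a unique finite-dimensional $H_{1,k}(S_m)$-simple $U_m$, and the KZ functor to $\mathbf{H}_q(n-jm)$), but it has a genuine gap at exactly the point you flag as ``the technical heart.'' The BE functor $\mathrm{Res}_p$ takes values in $\mathcal{O}(H_{1,k}((S_m)^j,\mathfrak{h}/\mathfrak{h}^{W_p}))$; it does not by itself produce a second Cherednik/Hecke category for $S_{n-jm}$ in the transverse directions, and the step that turns the $S_{n-jm}$-equivariance on $\mathbb{C}[\mathfrak{h}^{W_p}]$ into a bona fide $\mathbf{H}_q(n-jm)$-action is precisely what must be constructed. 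Appealing to Shan's categorified Heisenberg action is misplaced here: that machinery is built for cyclotomic wreath-product algebras and the $\widehat{\mathfrak{sl}}_m$-categorification, and does not directly supply the decoupling of ``cluster'' and ``singleton'' degrees of freedom that your argument needs; Wilcox's actual argument is a more direct support-theoretic analysis using the BE functors and filtration by supports, not the Heisenberg categorification.

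Two further points are glossed over. First, the claim that $\Delta_k(m\mu+\nu)$ restricts under $\mathrm{Res}_p$ to $U_m^{\boxtimes|\mu|}\otimes\Delta_k(\nu)$ twisted by $\pi_\mu$ is a substantive computation, not an automatic consequence of the setup; indeed the paper itself only says the simple $L_k(m\mu+\nu)$ ``is expected to correspond'' to $\pi_\mu\boxtimes D_\nu$, signaling that even after the categorical equivalence the labeling of simples is nontrivial. Second, the issue of whether the normalizer $S_j$ acts honestly rather than projectively is not a ``secondary'' point: if the lift were only projective you would get a twisted group algebra and the statement would change, so this must be settled before the $\mathrm{Rep}\,S_j$ factor can be extracted. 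As written, your proposal is a plausible roadmap but not a proof.
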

\end{example}

\begin{remark}
We note that in the important special case 
when $\ag'$ is the winding subalgebra 
$$\ag'=\g[t^m,t^{-m}]\oplus\Bbb C K,$$ 
$2\le m\le \infty$ (where for $m=\infty$, 
$\g[t^m,t^{-m}]\oplus \Bbb C K$ should be replaced by $\g$), 
the decomposition of 
$\bold V_0$ as an $\ag'$-module is completely described in 
the paper \cite{F2} (see also \cite{KW}). This decomposition easily 
yields the decomposition of $\bold V$ as an $\ag$-module, 
which appears in our conjectures. 
\end{remark}

\begin{remark}
After this paper appeared online, Conjecture \ref{main4} for cyclic groups $\Gamma$ and 
filtration ${\bold F}$ under some technical assumptions on the parameters was 
proved by Shan and Vasserot, \cite{SV}.  
\end{remark}

\subsection{Motivation for Conjectures \ref{pd} and \ref{rootun}}\label{pdmot}

Now we are ready to explain the motivation behind Conjectures 
\ref{pd} and \ref{rootun}. Namely, we expect that the
inner product $(,)$ on finite dimensional modules is obtained by restriction 
of the Shapovalov form on ${\bold V}$ to appropriate isotypic components.
Note that this was shown in \cite{SV} for cyclic $\Gamma$ and rational $k$, 
under some restriction on the other parameters (\cite{SV}, Remark 5.14). 
This would imply Conjecture \ref{pd}, since ${\bold V}$ is a unitary representation, and 
the Shapovalov form is positive definite. 
We also expect that the form $(,)_q$ is obtained by a 
similar restriction to isotypic components of the Shapovalov form for the quantum analog of ${\bold V}$,
(i.e. the basic representation ${\bold V}_q$ of the corresponding quantum affine algebra $U_q(\widetilde{\g\oplus \Bbb C})$).  
This would imply Conjecture \ref{rootun}, since this form degenerates only at roots of unity. 

\subsection{Relation to quantum connections}

Consider the special case when $(\lambda,k)$ is a generic point of a singular hyperplane
$H$. In this case, 
it was conjectured by Bezrukavnikov and Okounkov that the spaces ${\rm gr}_iK_0({\rm Rep}\Gamma_n)_{\Bbb C}$ 
should be eigenspaces of the residue $C_H$ on the hyperplane $H$ of the quantum connection on the equivariant quantum cohomology 
of the Hilbert scheme of the minimal resolution of the Kleinian singularity $\Bbb C^2/\Gamma$ (cf. \cite{MO}). 
This conjecture agrees with our conjectures (which, in the case of a generic point on a hyperplane, should 
be provable by using deformation theory). Namely, in the case of $\Gamma=1$ and the hyperplane $H=E_{m,N}$ 
(i.e., $k=-N/m$), this is explained in \cite{BE}; the case of general $\Gamma$ is similar, see 
\cite{MO}. So let us consider the remaining case 
$H=H_{\alpha,m,N}$. In this case, $\ag'$ is the ${\mathfrak{sl}}_2$-subalgebra 
corresponding to the root $\alpha+m\delta$, and $C_H=C_{\alpha+m\delta}$ 
is the Casimir of this ${\mathfrak{sl}}_2$-subalgebra. So the conjecture 
of Bezrukavnikov and Okounkov says that the decomposition into ${\rm gr}_i$ is the decomposition into eigenspaces 
of the Casimir $C_{\alpha+m\delta}$. But this decomposition is clearly the same as the decomposition 
into isotypic components for $\ag$, which implies that the conjectures agree. 

More generally, suppose $(\lambda,k)$ is a generic point of the intersection
of several singular hyperplanes. According to the insight of Bezrukavnikov and Okounkov, 
local monodromy of the quantum connection near $(\lambda,k)$ should preserve the space ${\rm gr}_i$ 
(after an appropriate conjugation). One also expects that in the neighborhood of $(\lambda,k)$ 
this connection is equivalent to its ``purely singular part'', i.e. the connection obtained from the 
full quantum connection by deleting all the regular terms of the Taylor expansion.  
This agrees with our conjectures, since the residues of the quantum connection 
on all the singular hyperplanes passing through $(\lambda,k)$ lie in (a completion of) $U(\ag)$
and hence preserve the decomposition of $\bold V$ into $\ag$-isotypic components.

\subsection{Relation to the work of Gordon-Martino and Shan
and to level-rank duality}

Let $\Gamma=\Bbb Z_\ell$ ($\ell=r+1$). We will assume for simplicity that 
$(\lambda,\alpha_i)=0$ for $i=1,...,\ell-1$ (we expect that the arguments below 
extend to the general case). 

\subsubsection{Irrational $k$}
First consider the case when $k$ is irrational. 
In this case, a construction 
due to I. Gordon and M. Martino and independently P. Shan \cite{Sh}
gives (under some technical conditions) a (projective) action 
of ${\mathfrak{gl}}_\infty$ of level $\ell$ on 
$\oplus_n K_0({\rm Rep}\Gamma_n)_{\Bbb C}=
{\mathcal F}^{\otimes \ell}$ of categorical origin. 
Here by ${\mathfrak{gl}}_\infty$
we mean the Lie algebra of infinite matrices with finitely many
nonzero diagonals \cite{K}, and the above 
representation is the $\ell$-th power of the 
usual Fock module (with highest weight $\omega_0$). 

The action of Gordon-Martino and Shan is based on induction and
restriction functors for the rational Cherednik algebra 
$H_{c,k}(\Gamma_n)$ constructed in \cite{BE}; in particular, 
finite dimensional representations are singular vectors (and one
should expect that conversely, any singular vector is a linear combination 
of finite dimensional representations). 

So let us find the singular vectors of this action of 
${\mathfrak{gl}}_\infty$. To this end, we need to decompose 
${\mathcal F}^{\otimes\ell}$ into isotypic components 
for ${\mathfrak{gl}}_\infty$. For this purpose, consider the product 
$$
E={\mathcal F}^{\otimes\ell}\otimes \Bbb C[\Bbb Z^\ell], 
$$
where for $a\in \Bbb Z^\ell$ the vector $1\otimes e^a$ is put in degree 
$-a^2/2$. By the boson-fermion correspondence (\cite{K}),  
$$
E=(\wedge^{\frac{\infty}{2}+\bullet} U)^{\otimes \ell},
$$ 
where $U$ is the vector representation of 
${\mathfrak{gl}}_\infty$, and $\wedge^{\frac{\infty}{2}+\bullet}$ 
denotes the space of semiinfinite wedges (of
all degrees). Thus, 
$$
E=\wedge^{\frac{\infty}{2}+\bullet} 
(U\otimes \Bbb C^\ell).
$$
By an infinite dimensional analog of the Schur-Weyl duality
between ${\mathfrak{gl}}(U)$ and ${\mathfrak{gl}}(W)$ 
inside $\wedge (U\otimes W)$ for finite dimensional spaces $U$
and $W$ (which is a limiting case of level-rank duality, see below), 
this implies that the centralizer of ${\mathfrak{gl}}_\infty$ 
in this representation is the algebra generated by 
${\mathfrak{gl}}_\ell$ acting by degree-preserving transformations. 
More precisely, we have a decomposition (\cite{F1}, Theorem 1.6):
$$
E=\oplus_\nu {\mathcal F}_{\nu^*}\otimes L_\nu,
$$
where $\nu$ runs over partitions with at most $\ell$ parts, 
$L_\nu$ are the corresponding representations 
of ${\mathfrak{gl}}_\ell$, and ${\mathcal F}_{\nu^*}$ 
is the irreducible representation of ${\mathfrak{gl}}_\infty$
corresponding to the dual partition $\nu^*$. 
Thus, the space of singular vectors is given by the formula
$$
E_{\rm sing}=\oplus_\nu L_\nu,
$$
and hence 
$$
{\mathcal F}^{\otimes \ell}_{\rm sing}=\oplus_\nu L_\nu[0]. 
$$
Moreover, $L_\nu[0]$ sits in degree $-\nu^2/2$. 
Thus we get that the number of finite dimensional representations 
equals 
$$
\sum_{\nu: \nu^2=2n}\dim L_\nu[0],
$$
as predicted by Conjecture \ref{main2}.

\subsubsection{Rational $k$} 

Now assume that $k$ is a rational number with denominator $m>1$.
Then the construction of Gordon-Martino and Shan \cite{Sh} yields an action 
of $\widehat{\mathfrak{sl}}_m$ of level $\ell$ on 
$\oplus_n K_0({\rm Rep}\Gamma_n)_{\Bbb C}=
{\mathcal F}^{\otimes \ell}$, 
where ${\mathcal F}$ is the Fock representation 
of ${\mathfrak{gl}}_\infty$ of highest weight $\omega_0$
restricted to $\widehat{\mathfrak{sl}}_m\subset 
{\mathfrak{gl}}_\infty$. Namely, the action is defined by induction and 
restriction functors of \cite{BE}. We expect that by using the restriction functors 
${\mathcal O}_{c,k}(\Gamma_n)\to {\mathcal O}_{c,k}(\Gamma_{n-mj})$ 
(restricting to the locus with $j$ $m$-tuples of equal coordinates),
and the induction functors in the opposite direction, this action 
can be upgraded to an action of $\widehat{\mathfrak{gl}}_m$. 
\footnote{I have been told by I. Losev that this step is quite
nontrivial and requires passing to derived functors.}
 
As before, finite dimensional representations are singular vectors for this action, 
and one should expect that conversely, any singular vector is a linear combination 
of finite dimensional representations. 

So let us find the singular vectors of this action of 
$\widehat{\mathfrak{gl}}_m$. To this end,
we need to decompose ${\mathcal F}^{\otimes \ell}$
into isotypic components for $\widehat{\mathfrak{gl}}_m$.
For this purpose, as before, consider the space $E$. 
The space $U$ considered above now equals 
$\Bbb C^m[t,t^{-1}]$, so the boson-fermion correspondence yields
$$
E=\wedge^{\frac{\infty}{2}+\bullet} 
(\Bbb C^m\otimes \Bbb C[t,t^{-1}])^{\otimes \ell},
$$ 
Thus, 
$$
E=\wedge^{\frac{\infty}{2}+\bullet} 
(\Bbb C^m\otimes \Bbb C[t,t^{-1}]\otimes \Bbb C^\ell).
$$
By an affine analog of the Schur-Weyl duality,
this implies that the centralizer of $\widehat{\mathfrak{gl}}_m$ 
in this representation is the algebra generated by 
$\widehat{\mathfrak{sl}}_\ell$ acting at level $m$. 
 
More precisely, this is an instance of the 
phenomenon called {\it the level-rank duality} (cf. \cite{F1}).
Namely, let $P_{+0}^{m,\ell}$ 
be the set of dominant integral weights 
for $\widehat{\mathfrak{sl}}_m$ at level $\ell$ 
which are trivial on the center of $SL_m$. 
(We don't take into account the action of $D$ here, so this is a finite set). 
Then level-rank duality implies that there is a natural bijection 
$\dagger: P_{+0}^{m,\ell}\cong P_{+0}^{\ell,m}$, and 
we have a decomposition 
$$
E=\oplus_{\nu\in P_{+0}^{\ell,m}} L_{\nu^\dagger}\otimes {\mathcal F}\otimes L_{\nu},
$$
where $\widehat{\mathfrak{gl}}_m$ acts irreducibly in the product of the 
first and the second components, and $\widehat{\mathfrak{sl}}_\ell$ acts irreducibly on the third 
component in each summand. 

This shows that the space of singular vectors is 
$$
E_{\rm sing}=\oplus_{\nu\in P_{+0}^{\ell,m}} L_\nu. 
$$
Now, the highest weight vector of $L_\nu$ sits in degree $-\nu^2/2$, so 
we get that the number of finite dimensional irreducible representations
of $H_{c,k}(\Gamma_n)$ is 
$$
\sum_{\nu\in P_{+0}^{\ell,m}: n-\frac{\nu^2}{2}\in m\Bbb Z_{\ge 0}} \dim L_\nu[0,\frac{\nu^2}{2}-n],
$$
where the first entry in the square brackets is weight under $\h$, 
and the second one is the degree (noting that the degrees of vectors 
in $L_\nu$ are multiples of $m$). This is exactly the prediction of 
Conjecture \ref{main5}.

\end{document}